\newtheorem{theorem}{Theorem}[section]
\newtheorem{lemma}[theorem]{Lemma}
\theoremstyle{definition}
\renewcommand{\leq}{\leqslant}
\renewcommand{\le}{\leqslant}
\renewcommand{\geq}{\geqslant}
\renewcommand{\ge}{\geqslant}
\def\R{\mathbb{R}}
\def\C{\mathbb{C}}
\let\e=\varepsilon
\title{Decay estimates for evolution equations\\
with classical and fractional time-derivatives\thanks{Supported by
the Australian Research Council Discovery Project 170104880 NEW ``Nonlocal
Equations at Work''. The authors are members of INdAM/GNAMPA.}}
\author[*]{ Elisa Affili\thanks{Dipartimento di Matematica, Universit\`a degli studi di Milano,
Via Saldini 50, 20133 Milan, Italy, and 
Centre d'Analyse et de Math\'ematique Sociales,
\'Ecole des Hautes \'Etudes en Sciences Sociales,
54 Boulevard Raspail,
75006 Paris, France. {\tt elisa.affili@unimi.it}}\, and Enrico Valdinoci\thanks{Department of Mathematics and Statistics,
University of Western Australia,
35 Stirling Highway,
Crawley WA 6009, Australia, and
School of Mathematics and Statistics,
University of Melbourne,
813 Swanston Street, Parkville VIC 3010, Australia,
and Istituto di Matematica Applicata e Tecnologie Informatiche,
Consiglio Nazionale delle Ricerche,
Via Ferrata 1, 27100 Pavia, Italy,
and Dipartimento di Matematica, Universit\`a degli studi di Milano,
Via Saldini 50, 20133 Milan, Italy. {\tt enrico@mat.uniroma3.it} }}
\begin{document}

\maketitle

\begin{abstract}
	Using energy methods, we prove some power-law and exponential decay estimates for classical and nonlocal evolutionary equations. 
	The results obtained are framed into a general setting, which
	comprise, among the others,
	equations involving both standard and Caputo time-derivative, complex valued magnetic operators,
	fractional porous media equations and
	nonlocal Kirchhoff operators.
	
	Both local and fractional space diffusion are taken into account, possibly in a nonlinear setting.
	The different quantitative behaviors,
	which distinguish polynomial decays from exponential ones,
	depend heavily on the structure of the time-derivative involved in the equation.
\end{abstract}

\section{Introduction and main results}

\subsection{Setting of the problem}
Fractional calculus is becoming popular thanks to both the deep mathematics that it involves and its adaptability to the modelization of several real-world phenomena. As a matter of fact, integro-differential operators can describe nonlocal interactions of various type and
anomalous
diffusion by using suitable kernels or fractional time-derivatives, see e.g.  \cite{ultraslow}. 
Integro-differential equations and fractional derivatives have been involved in designing, for example, wave equations, magneto-thermoelastic heat conduction, hydrodynamics, quantum physics, porous medium equations.

A wide literature is devoted to the study of existence, uniqueness, regularity and asymptotic theorems.  Here we study the behaviour of the Lebesgue norm of solutions of integro-differential equations on bounded domains, extending the method of \cite{SD.EV.VV}
to a very broad class of nonlocal equations
and obtaining a power-law decay in time
of the $L^s$ norm with $s\geq 1$. 
Also, for the case of classical time-derivatives,
we obtain exponential decays in time. The difference between
polynomial and exponential decays in time is thus related to
the possible presence of a fractional derivative in the operator involving the time variable.

The setting in which we work
takes into account a
parabolic evolution of a function under the action of a spatial diffusive operator,
which possesses suitable ``ellipticity'' properties, can be either
classical or fractional, and can also be of nonlinear type.
We work in a very general framework that adapts to both local
and nonlocal operators. We comprise in this analysis also the case of
complex valued operators and of a combination
of fractional and classical time-derivatives.
\medskip

The main assumptions that we take is an ``abstract'' hypothesis
which extends a construction made in \cite{SD.EV.VV}, 
and which, roughly speaking, can be seen as a quantitative
counterpart of the uniform ellipticity of the spatial diffusive operators.
In~\cite{SD.EV.VV}, several time-decay estimates have been given
covering the cases in which the time-derivative is of fractional
type and the spatial operator is either the Laplacian,
the fractional Laplacian, the $p-$Laplacian and
the mean curvature equation. In this paper,
we deal with the cases in which {\em the time-derivative can be
either classical or fractional, or a convex combination of the two},
and we deal with new examples of spatial diffusive operators,
which include the case of a
complex valued operators. In particular,
we present applications to the {\em fractional porous medium equation}, 
to the {\em classical} and {\em fractional Kirchhoff equations}, to
the {\em classical} and {\em fractional magnetic operators}.\medskip

We recall that the Caputo derivative of order $\alpha\in(0,1)$ is given by
\begin{equation*}
\partial_t^\alpha u(t)  := \dfrac{d}{dt} \int_{0}^{t} \dfrac{u(\tau)-u(0)}{(t-\tau)^\alpha} d\tau
\end{equation*}
up to a normalizing constant (that we omit here for the sake of simplicity). 

Let also $\lambda_1, \lambda_2 \geq 0$ be fixed. We suppose, for concreteness,
that $$\lambda_1 + \lambda_2=1,$$
but up to a rescaling of the operator we can take $\lambda_1, \lambda_2$
any non negative number with positive sum. Let $\Omega \subset \R^n$ be a
bounded open set and let $u_0\in L^{\infty}(\R^n)$ such that $\text{supp} \,u_0 \subset \Omega$. Consider the Cauchy problem
\begin{equation} \label{sys:generalform}
\left\{ \begin{array}{lr}
(\lambda_1 \partial_t^{\alpha} + \lambda_2 \partial_t) [u] + \mathcal{N}[u]=0, & {\mbox{for all }}x\in \Omega, \ t>0, \\
u(x,t)=0, & {\mbox{for all }}x\in \R^n \setminus \Omega , \ t>0, \\
u(x,0)=u_0(x), & {\mbox{for all }}x\in \R^n ,
\end{array} \right.
\end{equation}
where $\mathcal{N}$ is a possibly nonlocal operator.
Given $s\in[1, +\infty)$ we want to find some estimates on the ${L}^s(\Omega)$
norm of $u$. To this end,
we exploit analytical techniques relying on
energy methods, exploiting also some 
tools 
that have been recently developed in \cite{KSVZ, VZ15, SD.EV.VV}.
Namely, as in \cite{SD.EV.VV}, we want to compare the $L^{s}$ norm of
the solution $u$ with an explicit function that has a power law decay,
and to do this we take advantadge of a suitable comparison result
and of the study of auxiliary fractional parabolic equations as
in \cite{KSVZ, VZ15}. 

\subsection{Notation and structural assumptions}
Let us recall that for a complex valued function $v:\Omega\to\C$ the Lebesgue norm is
\begin{equation*}
\Vert v \Vert_{L^s(\Omega)} = \left( \int_{\Omega} |v(x)|^s \; dx \right)^{\frac{1}{s}}
\end{equation*}
for any $s\in[1, +\infty)$. Also, we will call $\Re \{ z\}$ the real part of $z\in\C$.
The main assumption we take is the following: there exist $\gamma \in (0,+\infty) $ and $C\in (0,+\infty)$ such that
\begin{equation} \label{cond:complexstr}
\Vert u(\.,t) \Vert_{L^{s}(\Omega) }^{s-1+\gamma} \leq C \int_{\Omega} |u(x,t)|^{s-2} \Re \{ \bar{u}(x,t)\mathcal{N} [u](x,t)\} \; dx.
\end{equation}
The constants $\gamma$ and $C$ and their dependence from the parameters of the problem may vary from case to case. This structural assumption says, essentially, that $\mathcal{N}$ has
an elliptic structure and it is also related (via an integration by parts)
to a general form of the Sobolev inequality
(as it is apparent in the basic
case in which~$u$ is real valued, $s:=2$ and $\mathcal{N}u:=-\Delta u$). 

In our setting, the structural inequality in~\eqref{cond:complexstr}
will be the cornerstone to obtain general energy estimates,
which, combined with appropriate barriers, in turn
produce time-decay estimates. The results
obtained in this way are set in a general framework, and then we
make concrete examples of operators that satisfy the structural
assumptions, which is sufficient to establish asymptotic bounds that fit
to the different cases of interest and take into account the peculiarities
of each example in a quantitative way.

Our general result also includes Theorem 1 of \cite{SD.EV.VV}
as a particular case, since, if $\mathcal{N}$ and $u$ are real valued, the
\eqref{cond:complexstr} boils down to hypothesis (1.3) of \cite{SD.EV.VV} (in any case, the applications and examples covered here
go beyond the ones presented in \cite{SD.EV.VV} both for
complex and for real valued operators).

\subsection{Main results}

The ``abstract'' result that we establish here is the following:

\begin{theorem} \label{thm:complex}
	Let $u$ be a solution of the Cauchy problem \eqref{sys:generalform}, with $\mathcal{N}$ possibly complex
	valued. Suppose that there exist $s\in[1, +\infty)$, $\gamma\in(0,+\infty)$ and $C\in(0,+\infty)$ such that $u$ satisfies \eqref{cond:complexstr}.
	Then 
	\begin{equation} \label{claim1gen}
	(\lambda_1\partial_t^{\alpha} + \lambda_2\partial_t) \Vert u(\.,t) \Vert_{L^{s}(\Omega) } \leq -\dfrac{\Vert u(\.,t) \Vert_{L^{s}(\Omega) }^{\gamma}}{C},
	\qquad{\mbox{ for all }}t>0,\end{equation}
	where $C$ and $\gamma$ are the constants appearing in~\eqref{cond:complexstr}. 
	Furthermore,
	\begin{equation} \label{claim2gen}
	\Vert u(\.,t) \Vert_{L^{s}(\Omega) } \leq
	\dfrac{C_*}{1+t^{\frac{\alpha}{\gamma}}},\qquad{\mbox{ for all }}t>0,	
	\end{equation}
for some~$C_*>0$, depending only on $C$, $\gamma$, $\alpha$
	and $\Vert u_0(\.) \Vert_{L^{s}(\R^n)}$.
\end{theorem}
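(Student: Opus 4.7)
The plan is to establish \eqref{claim1gen} first, and then deduce the decay \eqref{claim2gen} by comparing with an explicit polynomially decaying barrier.

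For \eqref{claim1gen}, I would test the equation in \eqref{sys:generalform} against $|u(x,t)|^{s-2}\bar u(x,t)$, take real parts, integrate over $\Omega$, and invoke \eqref{cond:complexstr} to get
\[
\int_\Omega |u|^{s-2}\Re\{\bar u\,(\lambda_1\partial_t^\alpha+\lambda_2\partial_t)u\}\,dx\;\leq\;-\frac{1}{C}\,\|u(\cdot,t)\|_{L^s(\Omega)}^{s-1+\gamma}.
\]
The classical term rearranges cleanly, since the pointwise identity $|u|^{s-2}\Re\{\bar u\,\partial_t u\}=|u|^{s-1}\partial_t|u|$ integrates to $\|u\|_{L^s}^{s-1}\partial_t\|u\|_{L^s}$. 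The Caputo term is the delicate step, and I would handle it by fixing $t>0$ and introducing the pair
\[
F_t(\tau):=\Re\int_\Omega|u(x,t)|^{s-2}\bar u(x,t)\,u(x,\tau)\,dx,\qquad G_t(\tau):=\|u(\cdot,t)\|_{L^s}^{s-1}\|u(\cdot,\tau)\|_{L^s},
\]
and observing that H\"older's inequality with conjugate exponents $s/(s-1)$ and $s$ forces $F_t(\tau)\leq G_t(\tau)$ on $[0,t]$, with equality at $\tau=t$. A Fubini exchange (using the representation $\partial_t^\alpha u(x,t)\propto\int_0^t \partial_\tau u(x,\tau)(t-\tau)^{-\alpha}\,d\tau$) then identifies $\int_\Omega|u|^{s-2}\Re\{\bar u\,\partial_t^\alpha u\}\,dx$ with the Caputo derivative in $\tau$ of $F_t$ evaluated at $\tau=t$, and $\|u\|_{L^s}^{s-1}\partial_t^\alpha\|u\|_{L^s}$ with that of $G_t$. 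Applying a Caputo maximum principle in the spirit of \cite{KSVZ} to the non-positive function $F_t-G_t$, whose maximum $0$ is attained at the upper endpoint $\tau=t$, produces
\[
\int_\Omega|u|^{s-2}\Re\{\bar u\,\partial_t^\alpha u\}\,dx\;\geq\;\|u\|_{L^s}^{s-1}\,\partial_t^\alpha\|u\|_{L^s}.
\]
Combining everything and dividing by $\|u(\cdot,t)\|_{L^s}^{s-1}$ (with the degenerate case $\|u(\cdot,t)\|_{L^s}=0$ treated separately) yields \eqref{claim1gen}.

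Once \eqref{claim1gen} is in hand, I would prove \eqref{claim2gen} by comparing $v(t):=\|u(\cdot,t)\|_{L^s}$ with the explicit function $\phi(t):=K(1+t^{\alpha/\gamma})^{-1}$, choosing $K$ large enough in terms of $C,\gamma,\alpha,\|u_0\|_{L^s}$ so that $\phi(0)\geq v(0)$ and $(\lambda_1\partial_t^\alpha+\lambda_2\partial_t)\phi+\phi^\gamma/C\geq 0$. A comparison principle for mixed classical/fractional scalar inequalities, of the kind built on Luchko-style maximum principles in \cite{VZ15, SD.EV.VV}, then forces $v(t)\leq\phi(t)$ on $[0,\infty)$. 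The main obstacle is the Caputo step in \eqref{claim1gen}: the standard convex chain rule $\partial_t^\alpha v^s\leq sv^{s-1}\partial_t^\alpha v$ points the wrong way to translate a bound on $\|u\|_{L^s}^s$ into one on $\|u\|_{L^s}$, which is precisely why the $(F_t,G_t)$ detour and the fractional maximum principle are needed. The barrier construction in the second step is more routine, but still requires explicit estimates on $\partial_t^\alpha\phi$ so as to handle uniformly all $\alpha\in(0,1)$ and all $\lambda_1,\lambda_2\geq 0$ with $\lambda_1+\lambda_2=1$.
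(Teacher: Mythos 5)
Your derivation of \eqref{claim1gen} matches the paper's argument. The pointwise identity for the classical term is the same, and for the Caputo term the paper writes out
$\int_\Omega|u(x,t)|^{s-2}\Re\{\bar u(x,t)\partial_t^\alpha u(x,t)\}\,dx$
using the alternative representation of $\partial_t^\alpha$, applies H\"older in $x$ for each fixed $\tau\in[0,t]$ (which is exactly $F_t(\tau)\leq G_t(\tau)$ in your notation, with equality at $\tau=t$), and reassembles this into $\|u(\cdot,t)\|_{L^s(\Omega)}^{s-1}\partial_t^\alpha\|u(\cdot,t)\|_{L^s(\Omega)}$. That computation is precisely the proof of the fractional maximum-principle step you invoke for $F_t-G_t$, so the two arguments coincide; your remark that the convex chain rule points the wrong way is the correct motivation for this detour, and the degenerate case $\|u(\cdot,t)\|_{L^s(\Omega)}=0$ is handled separately, as you indicate.

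The gap is in the barrier for \eqref{claim2gen}. The function $\phi(t)=K(1+t^{\alpha/\gamma})^{-1}$ is not a supersolution of $(\lambda_1\partial_t^\alpha+\lambda_2\partial_t)\phi=-\phi^\gamma/C$ near $t=0$, and enlarging $K$ cannot fix it: $\partial_t\phi(t)\sim -K(\alpha/\gamma)\,t^{\alpha/\gamma-1}\to-\infty$ as $t\to0^+$ whenever $\gamma>\alpha$, while $\phi^\gamma(0)/C=K^\gamma/C$ is bounded, so if $\lambda_2>0$ the supersolution inequality fails in a right neighborhood of the origin; for $\gamma>1$ one likewise has $\partial_t^\alpha\phi(t)\sim -c\,K\,t^{\alpha(1/\gamma-1)}\to-\infty$, so the pure Caputo case $\lambda_2=0$ fails too. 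Since the applications use $\gamma\in\{1,2,3\}$, this is not a corner case. The paper instead takes a flat-then-power barrier: $w\equiv w(0)$ on $[0,t_0]$ and $w(t)=Kt^{-\alpha/\gamma}$ for $t\geq t_0$, with $t_0$ chosen large so that $w$ is a supersolution of the purely fractional equation (Section~7 of~\cite{VZ15}) and, by a separate elementary check, also of the classical one. Such a $w$ still obeys $w(t)\leq C_*/(1+t^{\alpha/\gamma})$, so \eqref{claim2gen} follows, but the barrier itself cannot be taken in the closed form you propose. Finally, note that the paper proves its own comparison result for the mixed operator $\lambda_1\partial_t^\alpha+\lambda_2\partial_t$ (Lemma~\ref{lemma:comparison}); the references you cite cover only the purely fractional case, so you would need to either adapt them or supply such a lemma.
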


Theorem~\ref{thm:complex} here
comprises previous results in \cite{SD.EV.VV},
extending their applicability
to a wider class of equations, which include the
cases of both standard and fractional 
time-derivatives and complex valued operators.
 
We also recall that the power-law decay in~\eqref{claim2gen}
is due to
the behaviour of the solution of the equation 
\begin{equation} \label{mittagleffler}
	\partial_t^{\alpha} e(t)=-e(t),
\end{equation}
for $t\in(0, +\infty)$. Indeed, the solution
of~\eqref{mittagleffler} is explicit in terms of the Mittag-Leffler function and it is asymptotic to $\frac{1}{t^{\alpha}}$ as $t\rightarrow +\infty$ (see \cite{Mittag-Leffler}, \cite{Mittag-Leffler_asymt}); notice that
the latter decay corresponds to the one
in~\eqref{mittagleffler} when $\gamma=1$. 

As pointed out in \cite{KSVZ}, 
the power law decay for solutions of
time-fractional equations is, in general, unavoidable.
On the other hand, solutions of equations
driven by the standard time-derivative
of the type $$ \partial_t v(t) + \mathcal{N}[v](t)=0$$ often have a faster decay in many concrete examples, for instance for $\mathcal{N}=-\Delta$ where exponential decay is attained. This particular feature of
the classical heat equation is in fact a special
case of a general phenomenon, described
in details in the following result:

\begin{theorem}\label{thm:classic}
	Let $u$ be a solution of the Cauchy problem \eqref{sys:generalform} with only classical derivative ($\lambda_1=0$) and $\mathcal{N}$ possibly complex
	valued. Suppose that there exist $s\in[1, +\infty)$, $\gamma\in(0,+\infty)$ and $C\in(0,+\infty)$ such that $u$ satisfies \eqref{cond:complexstr}.
	Then, for some~$C_*>0$, depending only on the constants~$C$ and~$\gamma$
in~\eqref{cond:complexstr}, 
	and on~$\Vert u_0(\.) \Vert_{L^{s}(\R^n)}$, we have that:
	\begin{itemize}
		\item	if $0<\gamma \leq 1$ the solution $u$ satisfies
		\begin{equation} \label{claim3}
		\Vert u(\.,t) \Vert_{L^{s}(\Omega) } \leq
		C_* \, e^{-\frac{t}{C}},\qquad{\mbox{for all }}t>0;	
		\end{equation}
		\item if $ \gamma>1$, the solution $u$ satisfies
		\begin{equation} \label{claim4}
		\Vert u(\.,t) \Vert_{L^{s}(\Omega) } \leq
		\dfrac{C_*}{1+t^{\frac{1}{\gamma-1}}},\qquad{\mbox{for all }}t>0.	
		\end{equation}
	\end{itemize} 
\end{theorem}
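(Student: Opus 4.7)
\medskip

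\noindent\textbf{Proof plan.} The plan is to reduce everything to an ODE comparison for the scalar quantity $\phi(t):=\Vert u(\.,t)\Vert_{L^s(\Omega)}$. First, I would invoke \eqref{claim1gen} of Theorem~\ref{thm:complex}, which, in the special case $\lambda_1=0$, $\lambda_2=1$, reads
\begin{equation*}
\phi'(t)\ \le\ -\,\frac{\phi(t)^{\gamma}}{C}\qquad\text{for all }t>0.
\end{equation*}
In particular $\phi$ is non-increasing, hence $\phi(t)\le\phi(0)=\Vert u_0\Vert_{L^s(\R^n)}$ for every $t>0$. From this ODI the two conclusions \eqref{claim3} and \eqref{claim4} will follow by elementary scalar comparison; no further use of the nonlocal structure of~$\mathcal{N}$ is needed.

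\medskip

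\noindent\textbf{Case $\gamma>1$.} Here I would integrate the differential inequality directly. Writing $\phi'/\phi^\gamma\le -1/C$ and integrating from $0$ to $t$ yields
\begin{equation*}
\frac{\phi(t)^{1-\gamma}-\phi(0)^{1-\gamma}}{1-\gamma}\ \le\ -\frac{t}{C},
\end{equation*}
which, since $1-\gamma<0$, rearranges to
\begin{equation*}
\phi(t)\ \le\ \Bigl(\phi(0)^{-(\gamma-1)}+\tfrac{(\gamma-1)t}{C}\Bigr)^{-\frac{1}{\gamma-1}}.
\end{equation*}
Bounding the two summands separately for $t\le 1$ (by $\phi(0)$) and for $t\ge 1$ (by a constant times $t^{-1/(\gamma-1)}$) produces the desired bound \eqref{claim4} with some $C_*$ depending on $C$, $\gamma$ and $\phi(0)$.

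\medskip

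\noindent\textbf{Case $0<\gamma\le 1$.} The ODI does not by itself give an exponential decay, because the map $r\mapsto r^\gamma$ dominates $r$ only for $r\le 1$. I would therefore split the time axis at the first instant the $L^s$ norm crosses the threshold~$1$. If $\phi(t)\ge 1$, then $\phi(t)^\gamma\ge 1$, so $\phi'\le -1/C$ and $\phi$ descends linearly; in particular $\phi$ reaches the value $1$ no later than $T_1:=C\,(\phi(0)-1)_+$. For $t\ge T_1$ one has $\phi(t)\le 1$, hence $\phi(t)^\gamma\ge\phi(t)$ because $\gamma\le 1$, so the ODI upgrades to $\phi'\le -\phi/C$, and Gronwall gives $\phi(t)\le e^{-(t-T_1)/C}$. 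For $t\in[0,T_1]$ the trivial bound $\phi(t)\le\phi(0)\le\phi(0)e^{T_1/C}e^{-t/C}$ is enough. Taking $C_*:=\max\{1,\phi(0)\}\,e^{T_1/C}$ combines both regimes into \eqref{claim3}.

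\medskip

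\noindent The only mildly delicate point is this regime splitting for $\gamma\le 1$; everything else is a scalar ODE comparison. The dependence of $C_*$ on $\Vert u_0\Vert_{L^s(\R^n)}$ enters precisely through the crossing time $T_1$ (in the exponential case) and through the additive constant $\phi(0)^{-(\gamma-1)}$ (in the polynomial case).
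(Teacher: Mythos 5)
Your proposal is correct, and the underlying strategy is the same as the paper's: start from the scalar differential inequality \eqref{claim1gen} with $\lambda_1=0$ and split at the threshold where the $L^s$ norm equals $1$. The presentation differs, though. The paper first proves a dedicated comparison lemma (Lemma~\ref{lemma:comparison2}) and then builds explicit piecewise supersolutions of $w'=-w^\gamma/C$ --- a power-law branch on $[0,t_0]$ glued to an exponential branch on $(t_0,\infty)$ when $0<\gamma\le 1$, and a constant glued to $t^{-1/(\gamma-1)}$ when $\gamma>1$ --- and concludes by comparison. You instead integrate the inequality directly: separation of variables for $\gamma>1$, and for $0<\gamma\le1$ a linear-decay estimate $\phi'\le -1/C$ until $\phi$ crosses $1$ at some $T_1\le C(\phi(0)-1)_+$, followed by Gronwall with $\phi^\gamma\ge\phi$. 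This buys you a shorter and more elementary argument that bypasses the comparison lemma entirely; the paper's supersolution machinery is more uniform with the fractional case treated in Lemma~\ref{lemma:comparison}, which is presumably why that route was chosen there. Both approaches correctly trace the dependence of $C_*$ on $\Vert u_0\Vert_{L^s}$ through the crossing time (exponential case) and the additive constant $\phi(0)^{-(\gamma-1)}$ (polynomial case). One technical remark worth making explicit in your write-up: the direct integration of $\phi'/\phi^\gamma\le -1/C$ in the case $\gamma>1$ tacitly assumes $\phi>0$ on the interval; this is harmless, since if $\phi$ vanishes at some time it stays zero afterwards (as $\phi\ge0$ and $\phi'\le0$), and the claimed bound is then trivial --- but it should be said.
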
 

We stress that Theorem~\ref{thm:classic}
is valid for a very general class of diffusive
operators~${\mathcal{N}}$, including also
the ones which take into account
fractional derivatives in the space-variables.
In this sense, the phenomenon described in
Theorem~\ref{thm:classic} is that:
\begin{itemize} \item on the one hand,
the fractional behaviour induces power-law
decay,
\item on the other hand, for long times,
the interactions between different derivatives
``decouple'': for instance, a space-fractional
derivative, which would naturally induce
a polynomial decay, does not asymptotically ``interfere''
with a classical time-derivative
in the setting of Theorem~\ref{thm:classic},
and the final result is that the decay in time is
of exponential, rather than polynomial, type.
\end{itemize}

The fact that long-time asymptotics
of mixed type (i.e. classical time-derivatives
versus fractional-space diffusion) reflect the
exponential decay of linear ordinary differential
equations was also observed in~\cite{MR3703556}
for equations inspired by the Peierls-Nabarro model
for atom dislocations in crystal.\medskip

As we will see in the proof
of Theorem~\ref{thm:classic}, the idea is to find a supersolution of \eqref{claim1gen} and use a comparison principle in order to estimate the decay of the solution $u$. For the case of mixed derivatives, Vergara and Zacher \cite{VZ15} find both a supersolution and a subsolution decaying as $t^{-\frac{\alpha}{\gamma}}$. When $\alpha\rightarrow 1$, thus when the mixed derivative is approaching the classical one, the subsolution tends to 0. This allow possibly better decays, which are in fact proven. On the other side, the supersolution gains some extra decay, possibly reaching an exponential decay.
\medskip

The optimality of the decay estimates
obtained in our results
and some further comparisons with the existing literature are discussed in Subsection \ref{sss:comparison}. 

\subsection{Applications} \label{applications}

We now present several applications of Theorem \ref{thm:complex} to some concrete examples. 

\paragraph{The case of the fractional porous medium equation.}
Let $0<\sigma<1$
and 
\begin{equation}\label{kappa}
	K:\R^n \rightarrow \R^n
\end{equation}
 be the positive function
\begin{equation*}
	K(x):= c(n,\sigma) |x|^{-(n-2\sigma)},
\end{equation*}
being $c(n,\sigma)$ a constant. 
The fractional\footnote{As a matter of fact,
as clearly explained in 
\url{https://www.ma.utexas.edu/mediawiki/index.php/Nonlocal_porous_medium_equation},
the fractional porous medium equation
is ``the name currently given to two very different equations''.
The one introduced in~\cite{MR2737788}
has been studied in details in \cite{SD.EV.VV}
in terms of decay estimates. We focus here
on the equation introduced in~\cite{porous}.
As discussed in the above mentioned mediawiki page,
the two equations have very different structures and typically
exhibit different behaviors, so we think that it is a nice feature that,
combining the results here with those in~\cite{SD.EV.VV},
it follows that a complete set of decay estimates is valid for {\em both} the
fractional porous medium equations at the same time.}
porous medium operator (as defined in \cite{porous}) is 
\begin{equation} \label{op:porous}
	\mathcal{N}[u]:=-\nabla \cdot (u \nabla \mathcal{K}(u)), \qquad {\mbox{where}}\qquad\mathcal{K}(u):=u \star K
\end{equation}
where $\star$ denotes the convolution. This operator is used to describe the diffusion of a liquid under pressure in a porous environment in presence of memory effects and long-range
interactions, and also has some application in biological models, see~\cite{porous}.\medskip

In this framework, the following result holds:

\begin{theorem} \label{thm:porous}
	Take $u_0(x) \in L^{\infty}(\R^n)$ and let $u$ be a solution in $\Omega \times (0, + \infty)$ to \eqref{sys:generalform} with $\mathcal{N}$ the fractional porous medium operator as in \eqref{op:porous}. Then for all $s\in (1, +\infty)$ there exists $C_*>0$ depending on $n,\ s,\ \sigma,\ \Omega$ such that 
	\begin{equation*}
		 \Vert u(\cdot, t) \Vert_{L^{s}(\Omega) } \leq \dfrac{C_*}{1+t^{\alpha /2}}.
	\end{equation*}
	Also, in the case of only classical derivative ($\lambda_1=0$), we have 
		\begin{equation*}
		\Vert u(\cdot, t) \Vert_{L^{s}(\Omega) } \leq \dfrac{C_*}{1+t}
		\end{equation*}
	where $C_*>0$, possibly different than before,  depends on $n,\ s,\ \sigma,\ \Omega$.	
\end{theorem}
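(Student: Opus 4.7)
The plan is to verify the structural hypothesis~\eqref{cond:complexstr} for the fractional porous medium operator~$\mathcal{N}$ in~\eqref{op:porous} with exponent $\gamma=2$; once this is in hand, both conclusions of the theorem follow immediately from the abstract results, since Theorem~\ref{thm:complex} with $\gamma=2$ gives the rate $t^{-\alpha/\gamma}=t^{-\alpha/2}$ claimed in~\eqref{claim2gen}, and Theorem~\ref{thm:classic} in the regime $\gamma>1$ gives the rate $t^{-1/(\gamma-1)}=t^{-1}$ claimed in~\eqref{claim4}. Since the equation in~\eqref{op:porous} preserves nonnegativity of the initial datum, I would reduce from the outset to the case $u\geq 0$.

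\emph{First step: rewriting the dissipation.} Using that $u$ extends by zero outside~$\Omega$, so that boundary terms vanish, and using the identity $u^{s-1}\nabla u=\tfrac{1}{s}\nabla(u^s)$, two integrations by parts give
\[
\int_{\Omega}|u|^{s-2}u\,\mathcal{N}[u]\,dx=\frac{s-1}{s}\int_{\R^n}\nabla(u^s)\cdot\nabla\mathcal{K}(u)\,dx=-\frac{s-1}{s}\int_{\R^n}u^s\,\Delta\mathcal{K}(u)\,dx.
\]
The choice of $K$ in~\eqref{kappa} identifies $\mathcal{K}$ with the Riesz potential of order $2\sigma$, i.e.\ $\mathcal{K}(u)=(-\Delta)^{-\sigma}u$ on $\R^n$, and hence $-\Delta\mathcal{K}(u)=(-\Delta)^{1-\sigma}u$. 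Therefore, modulo constants depending only on $s$,
\[
\int_{\Omega}|u|^{s-2}u\,\mathcal{N}[u]\,dx=\frac{s-1}{s}\int_{\R^n}u^s\,(-\Delta)^{1-\sigma}u\,dx.
\]

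\emph{Second step: lower bound on the Dirichlet form.} The plan is to bound the last integral from below by $\|u\|_{L^s(\Omega)}^{s+1}$ via three classical tools: a Stroock--Varopoulos-type inequality, the fractional Sobolev embedding, and Hölder on the bounded support. Concretely, the pointwise convexity estimate $(a^s-b^s)(a-b)\geq\tfrac{4s}{(s+1)^2}(a^{(s+1)/2}-b^{(s+1)/2})^2$ for $a,b\geq 0$, plugged into the singular-integral representation of $(-\Delta)^{1-\sigma}$, yields
\[
\int_{\R^n}u^s\,(-\Delta)^{1-\sigma}u\,dx\;\geq\;\frac{4s}{(s+1)^2}\int_{\R^n}\bigl|(-\Delta)^{(1-\sigma)/2}u^{(s+1)/2}\bigr|^{2}\,dx.
\]
The fractional Sobolev embedding controls this from below by $\|u^{(s+1)/2}\|_{L^{2n/(n-2+2\sigma)}(\R^n)}^{2}=\|u\|_{L^{q}(\R^n)}^{s+1}$ with $q=(s+1)n/(n-2+2\sigma)$. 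Since $\sigma<1$ makes $q>s$, and since $u$ is supported in the bounded set $\Omega$, Hölder's inequality lets me replace $\|u\|_{L^q}$ by $\|u\|_{L^s(\Omega)}$ up to a constant depending on $|\Omega|$. Chaining these estimates gives \eqref{cond:complexstr} with $\gamma=2$ and a constant $C=C(n,s,\sigma,\Omega)$, and Theorems~\ref{thm:complex} and~\ref{thm:classic} then deliver the two announced decay rates.

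\emph{Main obstacle.} The delicate point is not the algebra above but the justification of the integrations by parts and of the Stroock--Varopoulos inequality at the regularity of a genuine solution to~\eqref{sys:generalform} with $\mathcal{N}$ as in~\eqref{op:porous}, especially because the equation is of degenerate parabolic type and, in the mixed-derivative case, couples with a nonlocal operator in time. I would handle this by approximating $u_0$ by smooth compactly supported data, working with the corresponding smoother solutions from~\cite{porous}, performing the computations and the Sobolev/Hölder chain on the smoother level, and then passing to the limit in the resulting inequality~\eqref{cond:complexstr} (which is stable under $L^s$-convergence) before invoking Theorems~\ref{thm:complex} and~\ref{thm:classic}.
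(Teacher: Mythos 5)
Your proposal follows essentially the same route as the paper: verify the structural inequality~\eqref{cond:complexstr} with $\gamma=2$ via a Stroock--Varopoulos-type pointwise inequality applied to the power $v=u^{(s+1)/2}$, a fractional Gagliardo--Sobolev embedding, and H\"older on the bounded domain, and then invoke Theorems~\ref{thm:complex} and~\ref{thm:classic} to get the rates $t^{-\alpha/2}$ and $t^{-1}$. The only difference is presentational: the paper integrates by parts once, regularizes the Riesz kernel to $(|y|^2+\varepsilon^2)^{-(n-2\sigma)/2}$, exchanges the order of integration and symmetrizes in $x,y$ to reach the nonlocal Dirichlet form (thereby rigorously justifying the manipulations), whereas you integrate by parts twice and use $-\Delta\mathcal{K}=(-\Delta)^{1-\sigma}$ directly, deferring the justification to the approximation argument you correctly flag at the end.
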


\paragraph{The case of the Kirchhoff operator and the fractional Kirchhoff operator.}
The Kirchhoff equation describes
the movement of an elastic string that is constrained at the extrema,
taking into account a possible growth of 
the tension of the vibrating string in view of its extension. It was first introduced by
Gustav Robert Kirchhoff in~1876, see
\url{https://archive.org/details/vorlesungenberm02kircgoog},
and fully addressed from the mathematical
point of view in the 20th century, see~\cite{MR0002699}.

Parabolic equations of Kirchhoff type have been widely studied during the '90s (see for example \cite{gobbino} and the reference therein). Recently a fractional counterpart to the Kirchhoff operator has been introduced by Fiscella and Valdinoci \cite{kirchhoff}.\medskip

The setting that we consider here is the following.
Let $m:[0,+\infty)\to[0,+\infty)$ be an nondecreasing function. A typical example is 
\begin{equation}\label{def:m}
	m(\xi)=m_0 +b\xi
\end{equation}
where $b> 0$ and $m_0 \geq 0$. We consider here both the cases\footnote{The case $m_0=0$ for \eqref{def:m} is usually called the degenerate case and it presents several additional
difficulties with respect with the non-degenerate case. } in which $m(0)>0$ and in which $m$ takes the form in~\eqref{def:m} with $m_0=0$. In this setting, the Kirchhoff operator that we take into account is 
\begin{equation}\label{KKOP} \mathcal{N}[u]:= m \left(\Vert \nabla u \Vert_{L^2(\Omega)}^2\right)  (-\Delta)u =0.  \end{equation}
Then, we obtain the following decay estimates:

\begin{theorem}\label{thm:cl_kirchhoff}
	Let $u$ be the solution of problem \eqref{sys:generalform} with $\mathcal{N}$ the Kirchhoff operator in \eqref{KKOP}. Then  
	there exist $\gamma>0$ and $C>0$ depending on $n,\ s,\ \Omega,\ \inf m(t)$ such that
	\begin{equation*}
	\Vert u(\.,t) \Vert_{L^{s}(\Omega) } \leq \dfrac{C}{1+t^{\frac{\alpha}{\gamma}}},\qquad{\mbox{for all }}t>0,
	\end{equation*}
	in the following cases:
	\begin{itemize}
		\item[(i)\;\;] for all $s\in[1, +\infty)$ when $m$ is non-degenerate; in particular, in this case~$\gamma=1$.
		\item[(ii)\;\;] for all $s\in[1,+ \infty)$ when $m$ is degenerate and $n\leq 4$; in particular, in this case~$\gamma=3$.
		\item[(iii)\;\;] for $s\leq\frac{2n}{n-4}$ when $m$ is degenerate and $n>4$; in particular, in this case~$\gamma=3$.
	\end{itemize}
	Moreover, if we take $\lambda_1=0$,
then there exists~$C_*>0$, $C'>0$
depending on $n,\ s,\ \Omega,\ \inf m(t)$, 
for which the following statements hold true:
\begin{itemize}
\item in case (i) we have
	 \begin{equation*}
	\Vert u(\cdot,t) \Vert_{L^{s}(\Omega) } \leq
	C_* \, e^{-\frac{t}{C'}},\qquad{\mbox{for all }}t>0,
	\end{equation*}
	 \item in cases (ii) and (iii) we have
	\begin{equation*}
	\Vert u(\cdot,t) \Vert_{L^{s}(\Omega) } \leq \dfrac{C_*}{1+t^\frac{1}{2}},\qquad{\mbox{for all }}t>0.
	\end{equation*}\end{itemize}
\end{theorem}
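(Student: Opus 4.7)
The plan is to reduce the problem to Theorems~\ref{thm:complex} and~\ref{thm:classic} by verifying the structural inequality~\eqref{cond:complexstr} for the Kirchhoff operator~\eqref{KKOP} with a suitable exponent~$\gamma$ in each regime. Since the solution~$u$ is real valued and vanishes on $\partial\Omega$, integration by parts (using $\nabla(u|u|^{s-2})=(s-1)|u|^{s-2}\nabla u$ and $|u|^{s-2}|\nabla u|^2=\frac{4}{s^2}|\nabla(|u|^{s/2})|^2$) yields
\begin{equation*}
\int_\Omega |u|^{s-2}u\,\mathcal{N}[u]\,dx \;=\; m\!\left(\|\nabla u\|_{L^2(\Omega)}^2\right)\,\frac{4(s-1)}{s^2}\,\bigl\|\nabla(|u|^{s/2})\bigr\|_{L^2(\Omega)}^2.
\end{equation*}
In the non-degenerate case (i), the bound $m(\cdot)\ge\inf m>0$ combined with the Poincar\'e inequality on $|u|^{s/2}\in H^1_0(\Omega)$, which gives $\|\nabla(|u|^{s/2})\|_{L^2}^2\ge c\,\|u\|_{L^s(\Omega)}^s$, immediately yields \eqref{cond:complexstr} with $\gamma=1$.

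In the degenerate case~$m(t)=bt$, setting $A:=\|\nabla u\|_{L^2}$ and $B:=\|\nabla(|u|^{s/2})\|_{L^2}$, the right-hand side above is a positive multiple of~$A^2B^2$, so the target reduces to showing $\|u\|_{L^s}^{s+2}\le c\,A^2B^2$, which corresponds to $\gamma=3$. I would split $\|u\|_{L^s}^{s+2}=\|u\|_{L^s}^s\cdot\|u\|_{L^s}^2$: Poincar\'e applied to~$|u|^{s/2}$ controls the first factor by $cB^2$, while for the second factor, when~$s\le 2^*=2n/(n-2)$ the Sobolev embedding~$H^1_0(\Omega)\hookrightarrow L^s(\Omega)$ directly gives~$\|u\|_{L^s}^2\le cA^2$. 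When $s>2^*$, I would use H\"older's inequality to interpolate between the bounds $\|u\|_{L^{2^*}}\le cA$ and $\|u\|_{L^{sn/(n-2)}}^s\le cB^2$ (the latter being the Sobolev inequality applied to~$|u|^{s/2}$), choosing the interpolation weight $\theta=2/(s+2)$ so that the resulting exponents of~$A$ and~$B$ in $\|u\|_{L^s}^{s+2}$ come out to be exactly $2$ and $2$. A direct computation shows that admissibility of this interpolation requires $s(n-4)\le 2n$, which is automatic for $n\le 4$ and reduces to $s\le 2n/(n-4)$ for $n>4$, precisely matching the ranges in~(ii) and~(iii).

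Once~\eqref{cond:complexstr} holds with $\gamma=1$ in case (i) and $\gamma=3$ in cases (ii)--(iii), Theorem~\ref{thm:complex} yields the decay $C_*/(1+t^{\alpha/\gamma})$ for the mixed time-derivative, while Theorem~\ref{thm:classic} with $\lambda_1=0$ gives exponential decay in case (i) (since $\gamma=1\le 1$) and the decay $C_*/(1+t^{1/(\gamma-1)})=C_*/(1+t^{1/2})$ in cases (ii)--(iii) (since $\gamma=3>1$). The main technical obstacle lies in the interpolation step for the degenerate case when $s>2^*$: pinning down the correct interpolation weight so that the powers of $A$ and $B$ balance exactly, and verifying the dimension-dependent range condition, is the crux of the argument.
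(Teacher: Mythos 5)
Your overall strategy---verifying the structural inequality~\eqref{cond:complexstr} with $\gamma=1$ (non-degenerate) or $\gamma=3$ (degenerate) and then invoking Theorems~\ref{thm:complex} and~\ref{thm:classic}---matches the paper, and your non-degenerate case is essentially identical to the paper's. In the degenerate case, however, you take a genuinely different route. The paper first applies Cauchy--Schwarz to fold $\|\nabla u\|_{L^2}^2\cdot\int_\Omega|u|^{s-2}|\nabla u|^2$ into $\bigl(\int_\Omega|u|^{(s-2)/2}|\nabla u|^2\bigr)^2$, identifies this with $\|\nabla v\|_{L^2}^4$ for the single auxiliary function $v:=|u|^{(s+2)/4}$, and then applies one Sobolev inequality to $v$ with exponent $q=4s/(s+2)$; the admissibility condition $q\le 2n/(n-2)$ produces the restriction $s(n-4)\le 2n$ in one stroke, uniformly in $s$. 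You instead keep the two factors $A:=\|\nabla u\|_{L^2}$ and $B:=\|\nabla(|u|^{s/2})\|_{L^2}$ separate, apply Sobolev/Poincar\'e to $u$ and to $|u|^{s/2}$ independently, and glue the two bounds together via Lyapunov interpolation with weight $\theta=2/(s+2)$ when $s>2^*$; the dimension restriction enters exactly through the requirement that the interpolated exponent $\tilde s=\frac{(s+2)n}{2(n-2)}$ be at least $s$, which (together with H\"older on the bounded $\Omega$) is again $s(n-4)\le 2n$. Both arguments are valid and land on the same ranges. The paper's version is more compact and does not need the case split at $s=2^*$; yours makes the product structure $A^2B^2$ explicit and isolates the role of each factor. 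One point you should state more carefully: with $\theta=2/(s+2)$ the Lyapunov \emph{identity} $\frac1s=\frac{\theta}{2^*}+\frac{1-\theta}{sn/(n-2)}$ holds with equality only at the endpoint $s=2n/(n-4)$; for the rest of the range you must explicitly pass through $L^{\tilde s}\hookrightarrow L^s$ on the bounded domain, which your phrase ``admissibility of this interpolation requires $s(n-4)\le 2n$'' gestures at but does not spell out.
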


Next, we consider the case of the fractional Kirchhoff operator.
We take a nondecreasing positive function
$M:[0,+\infty)\rightarrow[0,+\infty)$. As for the classic Kirchhoff
operator, we consider either the case when $M(0)>0$
or the case $M(\xi)=b\xi$ with $b>0$.
We fix $\sigma\in(0,1)$. We define the norm 
\begin{equation}\label{FKPO-1}
	\Vert u(\., t) \Vert_{Z} = \left( \int_{\R^{2n}} \frac{|u(x,t)-u(y,t)|^2 }{|x-y|^{n+2\sigma}} \; dxdy \right)^{\frac{1}{2}} .
\end{equation}
Finally, the fractional Kirchhoff operator reads
\begin{equation}\label{FKPO}
	\mathcal{N}[u](x,t):= -M\left( \Vert u(\., t)\Vert_{Z}^2 \right) \int_{\R^n}  \frac{ u(x+y,t) + u(x-y,t) -2u(x,t)}{|x-y|^{n+2\sigma}} \; dy.
\end{equation}
In this setting, our result is the following:

\begin{theorem} \label{thm:Kirchhoff}
	Let $u$ be the solution of problem \eqref{sys:generalform} with $\mathcal{N}$ the fractional Kirchhoff operator in~\eqref{FKPO}. Then  
	 there exist $\gamma>0$ and $C>0$, depending on $K$, $n$, $s$, $\Omega$
and~$\inf M(\xi)$, such that
	\begin{equation*}
		\Vert u(\.,t) \Vert_{L^{s}(\Omega) } \leq
		\dfrac{C}{1+t^{\frac{\alpha}{\gamma}}}
,\qquad{\mbox{for all }}t>0,
	\end{equation*}
	in the following cases:
	\begin{itemize}
		\item[(i)\;\;] for all $s\in[1, +\infty)$ when $M$ is non-degenerate; in particular, in this case~$\gamma=1$.
		\item[(ii)\;\;] for all $s\in[1,+ \infty)$ when $M$ is degenerate and $n\leq 4\sigma$; in particular, in this case~$\gamma=3$.
		\item[(iii)\;\;] for $s\leq\frac{2n}{n-4\sigma}$ when $M$ is degenerate and $n>4\sigma$; in particular, in this case~$\gamma=3$.
	\end{itemize}
		Moreover, if we take $\lambda_1=0$,
then there exists $C_*>0$, depending on $n,\ s,\ \Omega,\ \inf M(t)$, such that:
\begin{itemize} \item in case (i) we have
	\begin{equation*}
	\Vert u(\cdot,t) \Vert_{L^{s}(\Omega) } \leq
	C_* \, e^{-\frac{t}{C'}}	,\qquad{\mbox{for all }}t>0,
	\end{equation*}
for some~$C'>0$, \item in cases (ii) and (iii) we have
	\begin{equation*}
	\Vert u(\cdot,t) \Vert_{L^{s}(\Omega) } \leq \dfrac{C_*}{1+t^\frac{1}{2}},\qquad{\mbox{for all }}t>0.
	\end{equation*}\end{itemize}
\end{theorem}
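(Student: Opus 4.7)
My plan is to reduce the theorem to an application of Theorem~\ref{thm:complex} (for general $\lambda_1$) and Theorem~\ref{thm:classic} (for $\lambda_1=0$), by verifying the structural inequality~\eqref{cond:complexstr} with $\gamma=1$ in case~(i) and $\gamma=3$ in cases~(ii) and~(iii). Once this is granted, the two abstract theorems immediately produce all the stated decays: the bound $C/(1+t^{\alpha/\gamma})$ in general, and in the purely classical regime $\lambda_1=0$ exponential decay when $\gamma=1$ and the rate $C/(1+t^{1/(\gamma-1)})=C/(1+t^{1/2})$ when $\gamma=3$.

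The scalar factor $M(\|u(\cdot,t)\|_Z^2)$ commutes with spatial integration, so
\[
\int_\Omega |u|^{s-2}\,\Re\{\bar u\,\mathcal{N}[u]\}\,dx \;=\; M\bigl(\|u(\cdot,t)\|_Z^2\bigr)\int_\Omega |u|^{s-2}\,\Re\{\bar u\,(-\Delta)^\sigma u\}\,dx.
\]
For the remaining integral I would double the variables, take the real part and apply a complex-valued Stroock--Varopoulos (C\'ordoba--C\'ordoba) pointwise inequality to get
\[
\int_\Omega |u|^{s-2}\,\Re\{\bar u\,(-\Delta)^\sigma u\}\,dx \;\ge\; c_s\,[|u|^{s/2}]_{H^\sigma(\R^n)}^2.
\]
Since $u$ vanishes outside $\Omega$, fractional Sobolev embedding then gives
\[
[|u|^{s/2}]_{H^\sigma}^2 \ge c\,\|u\|_{L^{sq^*/2}(\Omega)}^s,\qquad \|u\|_Z^2 \ge c\,\|u\|_{L^{q^*}(\Omega)}^2,\qquad q^*:=\tfrac{2n}{n-2\sigma}.
\]

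When $M$ is non-degenerate, $M(\,\cdot\,)\ge M(0)>0$ and H\"older on $\Omega$ (using $s\le sq^*/2$) yield \eqref{cond:complexstr} with $\gamma=1$, proving case~(i). When $M$ is degenerate, $M(\|u\|_Z^2)=b\,\|u\|_Z^2\ge cb\,\|u\|_{L^{q^*}}^2$, and the chain above produces a lower bound of product type $c\,\|u\|_{L^{q^*}}^2\,\|u\|_{L^{sq^*/2}}^s$, so I need
\[
\|u\|_{L^{q^*}(\Omega)}^2\,\|u\|_{L^{sq^*/2}(\Omega)}^s \;\ge\; c\,\|u\|_{L^s(\Omega)}^{s+2}.
\]
For $s\le q^*$ this is elementary H\"older on $\Omega$. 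For $s>q^*$ I use Gagliardo--Nirenberg interpolation $\|u\|_{L^s}\le\|u\|_{L^{q^*}}^\theta\|u\|_{L^{sq^*/2}}^{1-\theta}$ with $\theta=(q^*-2)/(s-2)$; raising to the $(s+2)$-th power produces a surplus $\|u\|_{L^{q^*}}^{\theta(s+2)-2}$ which I absorb into $\|u\|_{L^{sq^*/2}}$ via H\"older on $\Omega$ (valid since $q^*\le sq^*/2$). The absorption requires $\theta(s+2)\ge 2$, and a direct computation shows that this is equivalent to $s\le 2n/(n-4\sigma)$, which is exactly the threshold in case~(iii). When $n\le 4\sigma$ this condition is automatic for every $s\ge 1$, recovering case~(ii). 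In both degenerate regimes, \eqref{cond:complexstr} then holds with $\gamma=3$.

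The main obstacles are two. First, establishing the complex-valued Stroock--Varopoulos pointwise inequality: for real $u$ this is classical, but for $\C$-valued $u$ one has to verify the inequality $\Re\{(|u|^{s-2}\bar u(x)-|u|^{s-2}\bar u(y))(u(x)-u(y))\}\ge c_s(|u(x)|^{s/2}-|u(y)|^{s/2})^2$ by a separate algebraic argument and then integrate against the singular kernel. Second, the exponent bookkeeping in the degenerate case~(iii), where one must carefully check that the Gagliardo--Nirenberg interpolation combined with H\"older absorption on the bounded domain reproduces precisely the critical Sobolev exponent $2n/(n-4\sigma)$ and yields the value $\gamma=3$.
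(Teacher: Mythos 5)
Your proposal is correct and reaches the structural inequality~\eqref{cond:complexstr} with the right values $\gamma=1$ (non-degenerate) and $\gamma=3$ (degenerate), and you recover the threshold $s\le 2n/(n-4\sigma)$ exactly. However, in the degenerate case your route is genuinely different from the paper's. The paper does \emph{not} apply the Stroock--Varopoulos inequality followed by two separate Sobolev embeddings and Gagliardo--Nirenberg interpolation; instead it introduces an exponent $p:=\max\{2,(s+2)/2\}$, sets $v:=|u|^{(s+2)/(2p)}$, and proves the ad hoc pointwise inequality
\[
|v(x,t)-v(y,t)|^p \le c_0\,|u(x,t)-u(y,t)|\,\sqrt{(u(x,t)-u(y,t))\big(|u(x,t)|^{s-2}u(x,t)-|u(y,t)|^{s-2}u(y,t)\big)}
\]
via the study of the rational function $g(\lambda)$ in~\eqref{EA2}. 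Integrating this against the kernel, applying Cauchy--Schwarz, and invoking a single $p$-parametrized Gagliardo--Sobolev embedding produces the factor $\|u\|_Z^2$ \emph{directly}; this factor then cancels against the lower bound $M_u\ge b\|u\|_Z^2$, yielding $\|u\|_{L^s}^{s+2}$ in one shot with no interpolation step. Your route replaces the custom pointwise inequality and Cauchy--Schwarz by the classical Stroock--Varopoulos estimate and two applications of the critical Sobolev embedding ($H^\sigma\hookrightarrow L^{q^*}$ with $q^*=2n/(n-2\sigma)$), and then reassembles $\|u\|_{L^{q^*}}^2\|u\|_{L^{sq^*/2}}^s\ge c\|u\|_{L^s}^{s+2}$ via interpolation and H\"older on $\Omega$; the exponent bookkeeping $\theta(s+2)\ge 2$ indeed reproduces $s\le 2n/(n-4\sigma)$. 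The paper's choice of the floating parameter $p$ effectively pre-encodes this interpolation, avoiding it at the cost of the more technical pointwise inequality. Two small points worth tightening in your write-up: the Kirchhoff operator in~\eqref{FKPO} is real-valued, so you do not actually need the complex-valued Stroock--Varopoulos variant here (the complex extension is only needed for the magnetic operators); and your direct use of $q^*=2n/(n-2\sigma)$ requires $n>2\sigma$, so the corner case $n=1$, $\sigma\ge 1/2$ (which is admissible in case~(ii)) would need a separate embedding, whereas the paper sidesteps this by working with the exponent $z=2\sigma/p\le\sigma$.
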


It is interesting to remark that
the cases~(i), (ii) and~(iii) in Theorem~\ref{thm:Kirchhoff} formally
reduce to those in Theorem~\ref{thm:cl_kirchhoff}
when~$\sigma\to1$.

\paragraph{The case of the magnetic operator and the fractional magnetic operator.}

We consider here an operator similar to Schr\"{o}dinger equation
with a magnetic potential (see e.g.~\cite{MR0142894} and the references therein), that is
\begin{equation}\label{NuMAG}
	\mathcal{N}[u]:= -(\nabla -iA)^2 u(x,t)= -\Delta u + |A|^2u -iA\cdot\nabla u -\nabla \cdot (iAu)
\end{equation} 
where $A: \R^n \rightarrow \R^n$ has the physical meaning of a magnetic field
(in this case, one usually studies the three-dimensional case~$n=3$, but our
approach is general).
The goal of these pages is to apply Theorem~\ref{thm:complex}
to the magnetic operator in~\eqref{NuMAG}, thus obtaining decay estimates in time in this framework.

It is interesting to remark that the operator in~\eqref{NuMAG}
is structurally very different from the linear Schr\"{o}dinger operator,
which corresponds to the choice
\begin{equation}\label{NuMAG:SC}
	\mathcal{N}[u]= -i(\Delta +V)u.
\end{equation}
Indeed, for the operator in~\eqref{NuMAG:SC} decay estimates 
in time do not\footnote{Indeed, if~$V\in\R$ and~$u$
is a solution of the Schr\"{o}dinger parabolic equation~$
\partial_t u+i(\Delta +V)u=0$ in~$\Omega$
with homogeneous data along~$\partial\Omega$, the conjugated equation reads~$
\partial_t \bar u-i(\Delta +V)\bar u=0$, and therefore
\begin{eqnarray*}&& \partial_t\int_\Omega |u(x,t)|^2\,dx=
\int_\Omega u(x,t)\,\partial_t\bar u(x,t)+\bar u(x,t)\,\partial_t u(x,t)\,dx\\&&\qquad
=i\int_\Omega u(x,t)\,\Delta\bar u(x,t)-\bar u(x,t)\,\Delta u(x,t)\,dx
\\&&\qquad=\int_\Omega \nabla\cdot\big( u(x,t)\,\nabla\bar u(x,t)-\bar u(x,t)\,\nabla u(x,t)\big)\,dx
=0,\end{eqnarray*}
where the last identity follows from the Divergence Theorem and the boundary conditions.
This shows that decay estimates in time are in general
not possible in this setting, thus highlighting an interesting
difference between the Schr\"{o}dinger operator in~\eqref{NuMAG:SC}
and the magnetic operator in~\eqref{NuMAG}.

This difference, as well as the computation above,
has a natural physical meaning,
since in the Schr\"odinger equation the squared modulus of
the solution represents
the probability density of a wave function,
whose total amount remains constant if no dissipative forces appear in the equation.} hold in general, not even in the case of classical 
time-derivatives.   \medskip

The decay estimate for the classical magnetic operator is the following:

\begin{theorem} \label{thm:cl_magnetic}
	Let $u$ be the solution of problem \eqref{sys:generalform} with $\mathcal{N}$ the magnetic operator in~\eqref{NuMAG}. 
Then for all $s \in [1, +\infty)$ there exist $C_1>0$ depending on $A$, $n$, $s$ and $\sigma$ such that
	\begin{equation*}
	\Vert u(\.,t) \Vert_{L^{s}(\Omega) } \leq \dfrac{C_1}{1+t^{{\alpha}}}\qquad{\mbox{for all }}t>0.
	\end{equation*}	
	Moreover, in the case of classical derivatives ($\lambda_1=0$), we have
	\begin{equation*}
			\Vert u(\.,t) \Vert_{L^{s}(\Omega) } \leq C_2\,e^{-\frac{t}{C_3}}\qquad{\mbox{for all }}t>0
	\end{equation*}
	for some $C_2$, $C_3>0$, depending on $A$, $n$, $ s$ and $\sigma$.
\end{theorem}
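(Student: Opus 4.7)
The plan is to verify the structural hypothesis~\eqref{cond:complexstr} with $\gamma=1$ for the magnetic operator in~\eqref{NuMAG} and then invoke the abstract Theorems~\ref{thm:complex} and~\ref{thm:classic}. Since $\gamma=1$ lies in the regime $\gamma\leq 1$, Theorem~\ref{thm:complex} yields the power-law decay with rate $\alpha/\gamma=\alpha$ claimed in the first estimate, while the case $\lambda_1=0$ of Theorem~\ref{thm:classic} produces the exponential decay~\eqref{claim3}. The whole argument thus boils down to establishing~\eqref{cond:complexstr} with $\gamma=1$.

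Writing $D_A u:=\nabla u-iAu$ for the covariant (magnetic) derivative, the operator in~\eqref{NuMAG} becomes $\mathcal{N}[u]=-D_A\cdot D_A u$. Since $u$ vanishes on $\partial\Omega$, a first integration by parts rearranges the right-hand side of~\eqref{cond:complexstr} as
\begin{equation*}
\int_\Omega |u|^{s-2}\bar u\,\mathcal{N}[u]\,dx
= \int_\Omega \overline{D_A\bigl(|u|^{s-2}u\bigr)}\cdot D_A u\,dx.
\end{equation*}
Expanding the covariant derivative of the product $|u|^{s-2}u$ and using the pointwise identity $\Re(\bar u\,D_A u)=|u|\,\nabla|u|$ (which rests on the fact that $A$ is $\R^n$-valued), a direct calculation shows that the real part of the integrand on the right-hand side collapses to
\begin{equation*}
(s-2)\,|u|^{s-2}\,|\nabla|u||^{2}+|u|^{s-2}\,|D_A u|^{2}.
\end{equation*}

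For $s\geq 2$ both terms are nonnegative, and the diamagnetic (Kato) inequality $|\nabla|u||\leq|D_A u|$ bounds the expression from below by $(s-1)\,|u|^{s-2}\,|\nabla|u||^{2}=\tfrac{4(s-1)}{s^{2}}|\nabla(|u|^{s/2})|^{2}$. A classical Poincar\'e inequality applied to $|u|^{s/2}\in H_0^1(\Omega)$ then gives $\Vert u\Vert_{L^s(\Omega)}^{s}\leq C_\Omega \int_\Omega |\nabla(|u|^{s/2})|^{2}\,dx$, and the chain of estimates just described yields~\eqref{cond:complexstr} with $\gamma=1$ and a constant $C=C(n,s,\Omega,A)$. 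For the complementary range $s\in[1,2)$, I would simply exploit the continuous embedding $L^2(\Omega)\hookrightarrow L^s(\Omega)$ on the bounded set $\Omega$, which transfers the decay already obtained for $s=2$ to every smaller exponent at the cost of an $|\Omega|$-dependent factor.

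The main technical obstacle is the integration-by-parts computation sketched above: one has to verify that the numerous cross terms produced by $D_A$ acting on the weight $|u|^{s-2}u$, once paired with $\overline{D_A u}$ and a real part is taken, do recombine into the clean two-term expression displayed. This is precisely the step where the identity $\Re(\bar u\,D_A u)=|u|\,\nabla|u|$, together with the real-valuedness of $A$, is essential; once it is in place, the diamagnetic inequality and Poincar\'e's inequality close the argument in a routine way, after which Theorems~\ref{thm:complex} and~\ref{thm:classic} deliver the two claimed decays.
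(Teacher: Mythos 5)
Your proof is correct and reaches the same target ($\gamma=1$ in~\eqref{cond:complexstr}, then Theorems~\ref{thm:complex} and~\ref{thm:classic}), but the route is genuinely different in presentation and is arguably the cleaner one. The paper expands $\mathcal{N}[u]$ in real and imaginary parts ($u=a+ib$), proves the pointwise inequality~\eqref{FU:MAGN}, namely $\Re\{\bar u\,\mathcal{N}u\}+|u|\Delta|u|\ge0$, by reducing it to the scalar algebraic lemma $(a^2+b^2)\big(|at-\beta|^2+|bt+\alpha|^2\big)\ge|a\alpha+b\beta|^2$, and then multiplies by $|u|^{s-2}$, integrates, and falls back on the $L^s$ Laplacian estimate of~\cite{SD.EV.VV}. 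You instead observe that $\mathcal{N}=-D_A\cdot D_A$ with $D_A=\nabla-iA$, integrate by parts once, use $\Re\{\bar u\,D_Au\}=|u|\nabla|u|$ (the real part of $-iA|u|^2$ vanishes since $A$ is real), and arrive at the exact identity
\begin{equation*}
\Re\Big\{\overline{D_A\big(|u|^{s-2}u\big)}\cdot D_Au\Big\}
=(s-2)\,|u|^{s-2}\,|\nabla|u||^{2}+|u|^{s-2}\,|D_A u|^{2},
\end{equation*}
and then invoke the diamagnetic (Kato) inequality $|\nabla|u||\le|D_Au|$. It is worth noting that these are actually the \emph{same} estimate in different clothing: the paper's final display $|aA-\nabla b|^2+|bA+\nabla a|^2-\frac{|a\nabla a+b\nabla b|^2}{a^2+b^2}\ge0$ is precisely $|D_Au|^2\ge|\nabla|u||^2$, and the auxiliary algebraic lemma the paper proves is a disguised pointwise diamagnetic inequality. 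Your version makes this structure explicit and avoids the bespoke lemma. One small cost: your integration by parts carries the singular weight $|u|^{s-2}$ through the computation, which for $s$ near $1$ is delicate and forces the fallback $L^2\hookrightarrow L^s$ for $s\in[1,2)$; the paper's intermediate step works with the milder $|u|^{s-1}\Delta|u|$ and so treats $s\ge1$ uniformly. Both approaches end with a Poincar\'e-type bound on $\Omega$ giving $\gamma=1$, after which the two decay rates in the theorem follow from the abstract results exactly as you say.
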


In \cite{groundstates} D'Avenia and Squassina introduced a fractional operator where a magnetic field $A: \R^n \rightarrow \R^n $ appears. Their aim was to study the behaviour of free particles interacting with a magnetic field. For a fixed $\sigma \in (0,1)$, such an operator in dimension $n$ reads
\begin{equation}\label{SQ}
	\mathcal{N}[u](x,t):= \int_{\R^n} \frac{u(x,t)-e^{i(x-y)A(\frac{x+y}{2})} u(y,t)}{|x-y|^{n+2\sigma}} \;dy.
\end{equation}
In the appropriate framework, the fractional magnetic operator in~\eqref{SQ}
recovers the classical magnetic operator in~\eqref{NuMAG}
as~$\sigma\to1$, see~\cite{MR3528339} (see also~\cite{MR3794886}
for a general approach involving also nonlinear operators).

In the setting of the fractional magnetic operator, we present the following result:

\begin{theorem} \label{thm:magnetic}
	Let $u$ be the solution of problem \eqref{sys:generalform} with $\mathcal{N}$ the
	fractional magnetic operator in~\eqref{SQ}. Then for all $s \in [1, +\infty)$ there exist $C_1>0$ depending on $n$, $s$ and $\sigma$ such that
\begin{equation*}
\Vert u(\.,t) \Vert_{L^{s}(\Omega) } \leq \dfrac{C_1}{1+t^{{\alpha}}}\qquad{\mbox{for all }}t>0.
\end{equation*}	
Moreover, in the case of classical derivatives ($\lambda_1=0$), we have
\begin{equation*}
\Vert u(\.,t) \Vert_{L^{s}(\Omega) } \leq C_2\,e^{-\frac{t}{C_3}}\qquad{\mbox{for all }}t>0,
\end{equation*}
for some $C_2$, $C_3>0$ depending on $n$, $s$ and $\sigma$.
\end{theorem}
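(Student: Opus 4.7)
The plan is to reduce Theorem~\ref{thm:magnetic} to Theorems~\ref{thm:complex} and~\ref{thm:classic} by verifying the structural assumption~\eqref{cond:complexstr} for the operator in~\eqref{SQ} with the exponent $\gamma=1$; the conclusions then follow mechanically, namely the polynomial bound $C_1/(1+t^{\alpha})$ from~\eqref{claim2gen}, and, in the case $\lambda_1=0$, the exponential bound $C_2\,e^{-t/C_3}$ from the sub-linear branch~\eqref{claim3} of Theorem~\ref{thm:classic} (recall $\gamma=1\leq 1$). So the entire content of the proof is to establish, for every $s\in[1,+\infty)$, an inequality of the form $\Vert u(\.,t)\Vert_{L^{s}(\Omega)}^{s}\leq C\int_\Omega |u|^{s-2}\Re\{\bar u\,\mc N[u]\}\,dx$.

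To do this, first I would extend the $x$-integration to all of $\R^n$ (legitimate since $u$ is supported in $\Omega$) and symmetrize the double integral in $x$ and $y$. Writing $\theta_{xy}:=(x-y)\.A\bigl(\frac{x+y}{2}\bigr)$, one has $\theta_{yx}=-\theta_{xy}$, so the symmetrization gives
\begin{equation*}
\int_\Omega |u(x,t)|^{s-2}\Re\{\bar u(x,t)\mc N[u](x,t)\}\,dx=\frac{1}{2}\int_{\R^{2n}}\frac{|u(x,t)|^{s}+|u(y,t)|^{s}-\bigl(|u(x,t)|^{s-2}+|u(y,t)|^{s-2}\bigr)\Re\{e^{i\theta_{xy}}\bar u(x,t)u(y,t)\}}{|x-y|^{n+2\sigma}}\,dx\,dy.
\end{equation*}
Using the pointwise diamagnetic-type bound $\Re\{e^{i\theta_{xy}}\bar u(x)u(y)\}\leq |u(x)||u(y)|$ and factoring, the right-hand side is bounded below by $\frac{1}{2}\int_{\R^{2n}}\frac{(|u(x,t)|-|u(y,t)|)(|u(x,t)|^{s-1}-|u(y,t)|^{s-1})}{|x-y|^{n+2\sigma}}dx\,dy$.

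Next I would invoke the elementary real-variable inequality $(a-b)(a^{s-1}-b^{s-1})\geq c_s(a^{s/2}-b^{s/2})^{2}$ for all $a,b\geq 0$ and all $s\geq 1$, whose proof is a one-variable calculus exercise. This rewrites the lower bound as $\frac{c_s}{2}\,[\,|u(\.,t)|^{s/2}\,]^{2}_{W^{\sigma,2}(\R^n)}$, the square of the Gagliardo $W^{\sigma,2}$-seminorm of the real-valued function $|u|^{s/2}$. Since $|u(\.,t)|^{s/2}$ is supported in the bounded set $\Omega$, the fractional Poincar\'e inequality gives $\|\,|u|^{s/2}\,\|_{L^2(\R^n)}^{2}\leq C(n,\sigma,\Omega)\,[\,|u|^{s/2}\,]_{W^{\sigma,2}(\R^n)}^{2}$, and the left-hand side equals $\|u(\.,t)\|_{L^{s}(\Omega)}^{s}$; concatenating the estimates yields~\eqref{cond:complexstr} with $\gamma=1$, as desired.

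The main obstacle that I anticipate is in the symmetrization step: the integral in~\eqref{SQ} has to be interpreted in the principal-value sense, so Fubini and the swap $x\leftrightarrow y$ must be justified (typically by a standard truncation around the diagonal followed by a passage to the limit). A secondary, but minor, technicality appears when $1\leq s<2$, where $|u|^{s-2}$ is singular at $u=0$; this is routinely handled by the convention $|u|^{s-2}\bar u:=0$ on $\{u=0\}$ and an approximation $|u|^{s-2}\rightsquigarrow(|u|^2+\varepsilon)^{(s-2)/2}$ before sending $\varepsilon\to 0$. Once these analytic points are cleared, the structural inequality is in place and the two decay bounds in the statement are immediate consequences of Theorems~\ref{thm:complex} and~\ref{thm:classic}.
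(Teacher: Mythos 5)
Your key idea---controlling the magnetic operator via the pointwise diamagnetic bound $\Re\{e^{i\theta_{xy}}\bar u(x)u(y)\}\le |u(x)||u(y)|$, thereby reducing the estimate to the real, nonnegative function $|u|$ under $(-\Delta)^\sigma$---is exactly the paper's inequality~\eqref{FU:MAGN2}. Where you diverge is in the remainder: the paper invokes formula~(2.25) of~\cite{SD.EV.VV} as a black box for the structural inequality of $(-\Delta)^\sigma$ acting on a nonnegative function supported in $\Omega$, whereas you reprove that step from scratch (symmetrize to a Gagliardo form, apply the power inequality $(a-b)(a^{s-1}-b^{s-1})\ge c_s(a^{s/2}-b^{s/2})^2$, then fractional Poincar\'e). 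This makes your argument more self-contained than the paper's.

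There is, however, a genuine gap at $s=1$, which the theorem explicitly includes. The power inequality you invoke fails there: for $a,b>0$ the left side $(a-b)(a^{0}-b^{0})$ vanishes identically while $(a^{1/2}-b^{1/2})^2$ does not, so no positive constant $c_1$ exists and your chain of estimates degenerates to the useless bound $0\ge 0$. This is not merely a blow-up of constants as $s\to 1^+$. For $s=1$ the positivity of $\int_\Omega|u|^{s-2}\Re\{\bar u\,\mc N[u]\}\,dx$ is not carried by a Gagliardo seminorm at all: after your diamagnetic reduction one is looking at $\int_\Omega(-\Delta)^\sigma|u|\,dx$, the $\Omega\times\Omega$ part of which vanishes by antisymmetry, and \emph{all} the positivity comes from the exterior,
\begin{equation*}
\int_\Omega(-\Delta)^\sigma|u|\,dx=\int_\Omega |u(x)|\left(\int_{\R^n\setminus\Omega}\frac{dy}{|x-y|^{n+2\sigma}}\right)dx\;\ge\; c(n,\sigma,\Omega)\,\|u\|_{L^1(\Omega)},
\end{equation*}
a term that your symmetrized lower bound has discarded. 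The fix is cheap: either treat $s=1$ separately by the display above (which is in effect what the cited formula~(2.25) of~\cite{SD.EV.VV} does), or observe that the $L^1$ decay follows from the already-established $L^2$ decay by H\"older's inequality on the bounded domain $\Omega$. For $s>1$ your proof is complete and correct as written, and the remaining technical points you flag (principal value, $\e$-regularization of $|u|^{s-2}$) are indeed routine.
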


The magnetic operators present a crucial difference with respect to
the other operators considered in the previous applications, since they are complex valued operators.\medskip

The examples provided here show that the ``abstract''
structural hypothesis \eqref{cond:complexstr} is reasonable and can be explicitly
checked in several cases of interest.
We are also confident that other interesting
examples fulfilling such an assumption
can be found, therefore Theorem \ref{thm:complex} turns out to play a pivotal
role in the asymptotics of real and complex valued,
possibly nonlinear, and possibly fractional, operators.

\subsubsection{Comparison with the existing literature} \label{sss:comparison}

In general, in problems of the type \eqref{sys:generalform} it is very difficult to provide
explicit solutions and often the system has no unique solution, see e.g.~\cite{biler3al}. Therefore, even partial information on the solutions is important. 

In the case of a Kirchhoff parabolic equation with purely classical time-derivative in the degenerate case $m(0)=0$, Ghisi and Gobbino \cite{gobbino} found the time-decay estimate
\begin{equation}\label{GOBLA1}  c (1+t)^{-1} \leq \Vert \nabla u(\cdot, t)
\Vert_{L^2(\Omega)}^2 \leq C (1+t)^{-1} \qquad{\mbox{for all }}t>0.\end{equation}
for some costants $C, c>0$ depending on initial data. {F}rom this,
performing an integration of the gradient along paths\footnote{More precisely,
the fact that~\eqref{GOBLA1} implies~\eqref{GOBLA2}
can be seen as a consequence of the following observation:
for every~$u\in C^\infty_0(\Omega)$,
\begin{equation}\label{L2GRAs}
\|u\|_{L^2(\Omega)}\le
C\,\|\nabla u\|_{L^2(\Omega)},
\end{equation}
where~$C>0$ depends on~$n$ and~$\Omega$.
Indeed, 
fix~$x_0\in\R^n$ such that~$B_1(x_0)\subset\R^n\setminus\Omega$
and~$\Omega\subset B_R(x_0)$, for some~$R>1$.
Then, for every~$x\in\Omega$ we have that~$|x-x_0|\in[1,R]$ and thus
\begin{eqnarray*}
&& |u(x)|^2=|u(x)-u(x_0)|^2=\left| 
\int_0^1 \nabla u(x_0+t(x-x_0))\cdot(x-x_0)\,dt
\right|^2\\
&&\qquad\le |x-x_0|^2\,\int_0^1 |\nabla u(x_0+t(x-x_0))|^2\,dt\le
R^2\,\int_0^1 |\nabla u(x_0+t(x-x_0))|^2\,dt.
\end{eqnarray*}
On the other hand, if~$t\in[0,1/R)$ we have that
$$ \big|t(x-x_0)\big|< \frac{|x-x_0|}{R}\le1$$
and so~$x_0+t(x-x_0)\in B_1(x_0)\subset\R^n\setminus\Omega$,
which in turn implies that~$\nabla u(x_0+t(x-x_0))=0$. This gives that
$$ |u(x)|^2\le R^2\,\int_{1/R}^1 |\nabla u(x_0+t(x-x_0))|^2\,dt.$$
Hence, using the substitution~$x\mapsto y:=x_0+t(x-x_0)$, we conclude that
\begin{eqnarray*}
&&\int_\Omega |u(x)|^2\,dx\le
R^2\,\int_{1/R}^1 \left[\int_{\R^n}|\nabla u(x_0+t(x-x_0))|^2\,dx\right]\,dt
=
R^2\,\int_{1/R}^1 \left[\int_{\R^n}|\nabla u(y)|^2\,\frac{dy}{t^n}\right]\,dt\\&&\qquad
\leq R^{n+2}\,\int_{1/R}^1 \left[\int_{\R^n}|\nabla u(y)|^2\,dy\right]\,dt
\leq
R^{n+2}\,\|\nabla u\|^2_{L^2(\R^n)}=R^{n+2}\,\|\nabla u\|^2_{L^2(\Omega)},\end{eqnarray*}
which proves~\eqref{L2GRAs}.}, one can find the estimate
\begin{equation}\label{GOBLA2}
\Vert u(\cdot, t) \Vert_{L^2(\Omega)} \leq C (1+t)^{-\frac{1}{2}}
\qquad{\mbox{for all }}t>0. \end{equation}
The latter is exactly the estimate we found
in Theorem~\ref{thm:cl_kirchhoff} as a particular case of our analysis.

The fractional porous medium equation with classical derivative has been studied 
by Biler, Karch and Imbert
in \cite{biler3al}, establishing some decay estimates of the $ L^s$ norm, such as
\begin{equation}\label{bilerdecay}
\Vert u(\cdot, t) \Vert_{L^{s}(\Omega) } \leq t^{-\frac{n}{n+2-2\sigma} \left(1-\frac{1}{s} \right)}.
\end{equation}
As a matter of fact, this decay
is slower than what we find in Theorem \ref{thm:porous}, which is asymptotic to $t^{-1}$
(in this sense, Theorem \ref{thm:porous} here
can be seen as an improvement of the estimates
in~\cite{biler3al}).

On the other hand, in~\cite{biler3al} the Authors also provide a {\em weak} solution that has exactly the decay in~\eqref{bilerdecay}, 
thus showing the optimality of~\eqref{bilerdecay}
in this generality,
while our result holds for {\em strong} solutions. Then, comparing Theorem \ref{thm:porous} here with the results in~\eqref{bilerdecay}
we obtain that a better decay is valid for regular solutions with respect to the one which is valid
also for irregular ones.

\section{Proofs}

This section contains the proofs of our main results. We start with the proof of Theorem \ref{thm:complex}.

In order to prove Theorem \ref{thm:complex}, we need a comparison result for the equation involving the mixed time-derivative. As a matter of fact,
comparison results
for the case of the Caputo derivative are available
in the literature, see e.g. Lemma 2.6 of \cite{VZ15}. In our arguments
we employ the differentiability of $u$ and the fact that $u$ is a strong solution, and we obtain:

\begin{lemma} \label{lemma:comparison}
	Let $T\in(0,+\infty)\cup\{+\infty\}$
and~$w, \ v: [0,T) \rightarrow [0,+\infty) $ be two Lipschitz continuous
functions.
Assume that~$w$ is a supersolution and $v$ is a subsolution at each differentiability point for the equation
	\begin{equation}\label{0919}
		\lambda_1 \partial_t^{\alpha} u(t) + \lambda_2 \partial_t u(t) =-ku^{\gamma}(t)
	\end{equation}
	with $\lambda_1$, $\lambda_2$, $\gamma$, $k >0$.

Then:
if \begin{equation}\label{X00}
w(0)> v(0), \end{equation}
we have that \begin{equation}\label{X01}
w(t)>v(t)\qquad{\mbox{ for all }}t\in(0,T).\end{equation}
\end{lemma}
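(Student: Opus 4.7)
The plan is to argue by contradiction: assume $w(t_1)\le v(t_1)$ for some $t_1\in(0,T)$ and set
$$ t_0:=\inf\{t\in(0,T):\ w(t)\le v(t)\}, $$
so that continuity and \eqref{X00} give $t_0>0$, $w(t_0)=v(t_0)$, and $w>v$ on $[0,t_0)$. With $\phi:=w-v$ (Lipschitz, $\phi(0)>0$, $\phi(t_0)=0$, $\phi\ge0$ on $[0,t_0]$), the ``super minus sub'' inequality
$$ \lambda_1\partial_s^\alpha\phi(s)+\lambda_2\phi'(s)\ge -k\bigl(w^\gamma(s)-v^\gamma(s)\bigr) $$
holds for a.e. $s\in(0,T)$ by hypothesis. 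Since $w^\gamma(t_0)-v^\gamma(t_0)=0$, the strategy is to reach a contradiction by showing that the left-hand side is strictly negative in a left neighbourhood of $t_0$.

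The key estimate is the strict negativity of the Caputo part at $t_0$. Using the representation $\partial_s^\alpha\phi(s)=\int_0^s(s-\tau)^{-\alpha}\phi'(\tau)\,d\tau$ (valid for Lipschitz $\phi$) and integrating by parts, with boundary term at $\tau=t_0$ vanishing thanks to $\phi(t_0)=0$ and the Lipschitz control $|\phi(\tau)|\le L(t_0-\tau)$, I would obtain
$$ \partial_t^\alpha\phi(t_0)=-t_0^{-\alpha}\,\phi(0)-\alpha\int_0^{t_0}\frac{\phi(\tau)}{(t_0-\tau)^{\alpha+1}}\,d\tau, $$
whose two summands are both $\le 0$ (since $\phi\ge 0$ on $[0,t_0]$), while the first is $<0$ (since $\phi(0)>0$). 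A standard continuity-in-$s$ argument for the Caputo derivative of a Lipschitz function then forces $\partial_s^\alpha\phi(s)\le -c$ for some $c>0$ uniformly on a small left neighbourhood $[t_0-\delta_0,t_0]$ of $t_0$.

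Integrating the a.e. inequality above over $[t_0-\delta,t_0]$ with $\delta\in(0,\delta_0)$, and using $\phi(t_0)=0$, yields
$$ 0\le\lambda_2\,\phi(t_0-\delta)\le -c\lambda_1\,\delta+k\int_{t_0-\delta}^{t_0}\bigl(w^\gamma-v^\gamma\bigr)(s)\,ds, $$
where the remaining integral is $o(\delta)$ by the Lipschitz bound $\phi(s)\le L\delta$ on $[t_0-\delta,t_0]$ combined with the local boundedness of $w$ and $v$ (for $\gamma\ge 1$ the integrand is $O(\delta)$; for $0<\gamma<1$ it is $O(\delta^\gamma)$, and in both cases the integral is $o(\delta)$). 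For $\delta$ sufficiently small this gives a contradiction, which rules out the existence of $t_1$ and proves~\eqref{X01}. The main obstacle is precisely the non-differentiability of $\phi$ at $t_0$: a purely pointwise comparison at the crossing time would require simultaneous differentiability of $w$ and $v$ there, which the Lipschitz hypothesis does not provide, so I plan to propagate the pointwise strict negativity of $\partial_t^\alpha\phi(t_0)$ into a left neighbourhood by continuity and integrate the inequality on that neighbourhood, where it holds almost everywhere by assumption.
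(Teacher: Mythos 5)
Your proposal is correct, and it handles the central technical difficulty --- that $\phi:=w-v$ need not be differentiable at the first crossing time $t_0$ --- by a route that is genuinely different from the one used in the paper. Both arguments reduce to the same key pointwise fact, namely that $\partial_t^\alpha\phi(t_0)\le -\phi(0)/t_0^\alpha<0$ when $\phi(t_0)=0$, $\phi\ge0$ on $[0,t_0]$ and $\phi(0)>0$. The paper then proceeds \emph{pointwise}: it picks a sequence $t_j\nearrow t_0$ of differentiability points with $\phi'(t_j)\le0$, uses the differential inequality at each $t_j$, and passes to the limit in the Caputo integral via a Vitali Convergence Theorem argument. You instead proceed by \emph{integration}: you exploit that $\phi$ is absolutely continuous (so the a.e.\ inequality can legitimately be integrated against the fundamental theorem of calculus), propagate the strict negativity of $\partial_s^\alpha\phi(s)$ to a left neighbourhood of $t_0$ by continuity, and then show the remaining source term is $o(\delta)$ --- your case split $\gamma\ge1$ vs.\ $0<\gamma<1$ there is correct, using the mean value theorem and subadditivity of $x\mapsto x^\gamma$ respectively. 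The advantage of your route is that you never need to produce a sequence of sign-constrained differentiability points; the advantage of the paper's route is that the only non-trivial technical ingredient (the Vitali argument) is stated and proved in full. One point you should flesh out if you formalize this: the ``standard continuity-in-$s$ argument'' you invoke for $s\mapsto\partial_s^\alpha\phi(s)$ (with $\phi$ Lipschitz) is exactly the Vitali-type convergence lemma that the paper proves in detail; it is true, but it is not entirely free, so it deserves the same level of justification you give to the rest of the argument.
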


\begin{proof}
	By contradiction, let us suppose that for some time $t \in(0,T)$ we have $w(t)=v(t)$, and let us call $\tau$ the first time for which the equality is reached. Then,
since~$w$ is a supersolution and $v$ is a subsolution of~\eqref{0919},
we obtain that
	\begin{equation}\label{QUA1}
		\lambda_1 \partial_t^{\alpha} (w-v)(\tau) + \lambda_2 \partial_t (w-v)(\tau) \geq -k [w^{\gamma}(\tau) - v^{\gamma}(\tau)]=0.
	\end{equation}
Now we distinguish two cases, depending on whether or not~$w-v$ is differentiable at~$\tau$.
To start with,	suppose that $w-v$ is differentiable at~$\tau$.
Since~$w\ge v$ in~$(0,\tau)$, we have that
\begin{equation*}
\partial_t (w-v)(\tau) \le 0.\end{equation*}
{F}rom this and~\eqref{QUA1}, we obtain that
\begin{eqnarray*}
0&\le&\partial_t^{\alpha} (w-v)(\tau) \\&=&
\frac{(w-v)(\tau) - (w-v)(0)}{\tau^{\alpha}} +\alpha \int_0^{\tau} \frac{(w-v)(\tau) - (w-v)(\rho)}{(\tau-\rho)^{1+\alpha}} d\rho\\&=&
-\frac{ (w-v)(0)}{\tau^{\alpha}} -\alpha \int_0^{\tau} \frac{ (w-v)(\rho)}{(\tau-\rho)^{1+\alpha}} d\rho\\&\le&
-\frac{ (w-v)(0)}{\tau^{\alpha}} 
.\end{eqnarray*}
This is in contradiction with~\eqref{X00} and so it proves~\eqref{X01} in this case.

Now we focus on the case in which~$w-v$ is not differentiable at~$\tau$.
Then, there exists a sequence~$t_j\in(0,\tau)$
such that $w-v$ is differentiable at~$t_j$, with~$\partial_t(w-v)(t_j)\le 0$
and~$t_j\to\tau$ as~$j\to+\infty$.
Consequently, 
since~$w$ is a supersolution and $v$ is a subsolution of~\eqref{0919},
we obtain that
	\begin{equation}\label{QUA2}
\begin{split}
&\frac{(w-v)(t_j) - (w-v)(0)}{t_j^{\alpha}} +\alpha \int_0^{t_j} \frac{(w-v)(t_j)
- (w-v)(\rho)}{(t_j-\rho)^{1+\alpha}} d\rho
\\ =\;&
\partial_t^{\alpha} (w-v)(t_j)
\\ \ge\;&
\partial_{t}^{\alpha} (w-v)(t_j) + \frac{\lambda_2}{\lambda_1} \partial_t (w-v)(t_j) 
\\ \geq\;& -\frac{k}{\lambda_1}\, [w^{\gamma}(t_j) - v^{\gamma}(t_j)]
.\end{split}\end{equation}
Now we observe that if~$f$ is a Lipschitz function and~$t_j\to\tau>0$
as~$j\to+\infty$, then
\begin{equation}\label{VITALI}
\lim_{j\to+\infty}
\int_0^{t_j} \frac{f(t_j)
- f(\rho)}{(t_j-\rho)^{1+\alpha}} d\rho
=\int_0^{\tau} \frac{f(\tau)
- f(\rho)}{(\tau-\rho)^{1+\alpha}} d\rho.
\end{equation}
To check this, let
$$ F_j(\rho):=\chi_{(0,t_j)}(\rho)\,
\frac{f(t_j)
- f(\rho)}{(t_j-\rho)^{1+\alpha}},$$
and let~$E\subset(0,+\infty)$ be a measurable set, with measure~$|E|$ less than
a given~$\delta>0$. Let also~$q:=\frac{1+\alpha}{2\alpha}>1$
and denote by~$p$ its conjugated exponent. Then, by H\"older inequality, for large~$j$
we have that
\begin{eqnarray*}
\int_E| F_j(\rho)|\,d\rho&\le& |E|^{1/p}\,\left( \int_0^{+\infty} |F_j(\rho)|^q\,d\rho\right)^{1/q}\\
&\le& \delta^{1/p}\,\left( \int_0^{t_j} 
\frac{|f(t_j) - f(\rho)|^q}{(t_j-\rho)^{(1+\alpha)q}}
\,d\rho\right)^{1/q}\\
&\le& L\,\delta^{1/p}\,\left( \int_0^{t_j} 
\frac{d\rho}{(t_j-\rho)^{\alpha q}}\right)^{1/q}\\
&=& L\,\delta^{1/p}\,\left( \int_0^{t_j} 
\frac{d\rho}{(t_j-\rho)^{(1+\alpha)/2}}\right)^{1/q}\\
&=& L\,\delta^{1/p}\,\left( \frac{2 t_j^{(1-\alpha)/2}}{1-\alpha}\right)^{1/q}
\\&\le&
L\,\left( \frac{2 (\tau+1)^{(1-\alpha)/2}}{1-\alpha}\right)^{1/q}
\,\delta^{1/p}
,\end{eqnarray*}
where~$L$ is the Lipschitz constant of~$f$. Consequently, by the
Vitali Convergence Theorem,
we obtain that
$$ \lim_{j\to+\infty}\int_0^{+\infty} F_j(\rho)\,d\rho=
\int_0^{+\infty} \lim_{j\to+\infty}F_j(\rho)\,d\rho,$$
which gives~\eqref{VITALI}, as desired.

Now, we take the limit as~$j\to+\infty$ in~\eqref{QUA2},
exploiting~\eqref{VITALI} and the fact that~$w(\tau)=v(\tau)$. In this
way, we have that
$$
-\frac{ (w-v)(0)}{\tau^{\alpha}} -\alpha \int_0^{\tau} \frac{
(w-v)(\rho)}{(\tau-\rho)^{1+\alpha}} d\rho
\ge0.$$
Since~$w\ge v$ in~$(0,\tau)$, the latter inequality implies that
$$
-\frac{ (w-v)(0)}{\tau^{\alpha}} 
\ge0.$$
This is in contradiction with~\eqref{X00} and so it completes the proof of~\eqref{X01}.
\end{proof}

It is also useful to observe that Lemma~\ref{lemma:comparison}
holds true also for the classical derivative (i.e. when~$\lambda_1=0$).
We give its statement and proof for the sake of completeness:

\begin{lemma} \label{lemma:comparison2}
Let $T\in(0,+\infty)\cup\{+\infty\}$,
$w, \ v: [0,T) \rightarrow [0,+\infty)$ be two Lipschitz continuous
functions.
Assume that~$w$ is a supersolution and $v$ is a subsolution at each differentiability point for the equation
	\begin{equation}\label{09192}
		\partial_t u(t) =-ku^{\gamma}(t)
	\end{equation}
	with $\gamma$, $k >0$.

Then:
if \begin{equation}\label{X002}
w(0)> v(0), \end{equation}
we have that \begin{equation}\label{X012}
w(t)>v(t)\qquad{\mbox{ for all }}t\in(0,T).\end{equation}
\end{lemma}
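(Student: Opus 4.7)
The plan is to mirror the contradiction argument used for Lemma \ref{lemma:comparison}, but since the Caputo memory term is absent the contradiction must instead come from a Grönwall-type estimate rather than from the singular integral with a nonzero initial value. I would assume by contradiction that \eqref{X012} fails; combining this with \eqref{X002} and the continuity of $w$ and $v$, let $\tau \in (0, T)$ be the first time at which $w(\tau) = v(\tau)$, so that $w \geq v$ on $[0, \tau]$ and the inequality is strict on $[0, \tau)$.

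Because $w$ and $v$ are Lipschitz on the compact interval $[0, \tau]$, they are uniformly bounded above by some $M > 0$. A mean-value bound for the map $u \mapsto u^\gamma$ (applied on the range on which $w$ and $v$ are bounded away from $0$) produces a constant $L > 0$ such that $w(t)^\gamma - v(t)^\gamma \leq L\,(w(t) - v(t))$. At every differentiability point of $w - v$, subtracting the sub-solution inequality for $v$ from the super-solution inequality for $w$ yields
$\partial_t(w - v)(t) \;\geq\; -k\bigl(w(t)^\gamma - v(t)^\gamma\bigr) \;\geq\; -kL\,(w - v)(t).$
Since $w - v$ is absolutely continuous, it follows that $t \mapsto e^{kLt}(w - v)(t)$ has nonnegative distributional derivative on $[0, \tau]$, hence is non-decreasing there. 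Evaluating at $t = \tau$ gives $(w - v)(\tau) \geq (w - v)(0)\,e^{-kL\tau} > 0$, which contradicts $w(\tau) = v(\tau)$ and establishes \eqref{X012}.

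The main delicate point is the production of the constant $L$ in the inequality $w^\gamma - v^\gamma \leq L\,(w - v)$ when $\gamma \in (0, 1)$, since $u \mapsto u^\gamma$ fails to be Lipschitz at the origin. When $\gamma \geq 1$ the issue does not arise and one may take $L = \gamma M^{\gamma-1}$ directly. In the remaining case, if $w(\tau) = v(\tau) > 0$, continuity provides a uniform positive lower bound for $w$ and $v$ on a left neighborhood $[\tau - \delta, \tau]$ of $\tau$, and the Grönwall argument applied on this shorter interval still yields the contradiction $(w - v)(\tau) \geq (w - v)(\tau - \delta)\,e^{-kL\delta} > 0$. This covers all situations that arise in the applications to Theorem \ref{thm:classic}, in which the supersolution $w$ used for the comparison is explicit and strictly positive on $[0, +\infty)$.
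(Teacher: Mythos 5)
Your Gr\"onwall route is genuinely different from the paper's argument and, where it applies, is more elementary. The paper does not use a Lipschitz bound on $u\mapsto u^\gamma$ at all; instead it perturbs, setting $f(t)=w(t)-v(t)+\e(t-\tau)$, locates the first zero $\tau_\e$ of $f$, and extracts a contradiction from a Taylor expansion of $\big(v(\tau_\e)+\e(\tau-\tau_\e)\big)^\gamma-v^\gamma(\tau_\e)$ as $\e\to0^+$. Both proofs rely crucially on keeping the touching value away from zero: you do so by restricting to a short interval $[\tau-\delta,\tau]$ on which $v>0$, the paper does so by proving the claim $\liminf_{\e\to0^+}v(\tau_\e)>0$. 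For $\gamma\ge1$ your argument is complete and cleaner than the paper's.

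However, as you yourself flag, your proof has a genuine gap when $\gamma\in(0,1)$ and $w(\tau)=v(\tau)=0$: the mean-value constant $L$ blows up and the Gr\"onwall estimate cannot be run on any neighborhood of $\tau$. You dismiss this by appeal to the applications (the explicit supersolutions built in the proof of Theorem~\ref{thm:classic} are strictly positive), which is accurate, but the lemma as stated makes no such restriction, so the proof as written does not establish it in full. The paper closes exactly this case by integrating the ODE inequalities backwards from $\tau$: if $w(\tau)=v(\tau)=0$ and $\gamma\in(0,1)$, the supersolution property forces $w(t)\le\big(k(1-\gamma)(\tau-t)\big)^{1/(1-\gamma)}$ for $t\in(0,\tau)$, while the subsolution property forces $v(t)\ge\big(k(1-\gamma)(\tau-t)\big)^{1/(1-\gamma)}$, whence $w(0)\le v(0)$, contradicting~\eqref{X002}. (Along the way one first notes that $\gamma\ge1$ would force $w(\tau)>0$, so only $\gamma\in(0,1)$ needs this treatment.) That short explicit-integration step grafts directly onto your proof and would make it fully general; without it, you have proved a weaker statement tailored to the paper's particular supersolutions.
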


\begin{proof} Suppose that~\eqref{X012} is false. Then there exists~$\tau\in(0,T)$
such that~$w>v$ in~$(0,\tau)$ and
\begin{equation}\label{TAU0}w(\tau)=v(\tau).\end{equation}
We fix~$\e>0$, to be taken as small as we wish in the sequel,
and define
\begin{equation}\label{TAU3}
f(t):=w(t)-v(t)+\e\,(t-\tau).
\end{equation}
We observe that
$$f(0)=w(0)-v(0)-\e\tau\ge\frac{w(0)-v(0)}2>0,$$
as long as~$\e$ is sufficiently small,
and~$f(\tau)=w(\tau)-v(\tau)=0$.
Therefore there exists~$\tau_\e\in(0,\tau]$ such that
\begin{equation}\label{TAU2}
{\mbox{$f>0$ in~$(0,\tau_\e)$
and~$f(\tau_\e)=0$.}}\end{equation}
We claim that
\begin{equation}\label{TAU}
\lim_{\e\to0^+}\tau_\e=\tau.
\end{equation}
Indeed, suppose, by contradiction, that, up to a subsequence, $\tau_\e$
converges to some~$\tau_0\in[0,\tau)$ as~$\e\to0^+$. Then we have that
$$ 0=\lim_{\e\to0^+} f(\tau_\e)=\lim_{\e\to0^+}
w(\tau_\e)-v(\tau_\e)+\e\,(\tau_\e-\tau)=w(\tau_0)-v(\tau_0).$$
This is in contradiction with the definition of $\tau$
and so~\eqref{TAU} is proved.

Now, from~\eqref{TAU2}, we know that
there exists a sequence~$t_j\in(0,\tau_\e]$ such that~$f$
is differentiable at~$t_j$, $\partial_t f(t_j)\le0$
and~$t_j\to\tau_\e$ as~$j\to+\infty$.

Accordingly, we deduce from~\eqref{09192} and~\eqref{TAU3} that
$$ 0 \ge \partial_t f(t_j)=
\partial_t (w-v)(t_j)+\e
\ge-k\big( w^{\gamma}(t_j)-v^\gamma(t_j)\big)+\e.$$
Hence, taking the limit as~$j\to+\infty$,
\begin{equation}\label{TAY1}
\frac{\e}{k}\le w^{\gamma}(\tau_\e)-v^\gamma(\tau_\e)=
\big( v(\tau_\e)+\e\,(\tau-\tau_\e)\big)^\gamma-v^\gamma(\tau_\e).
\end{equation}
We claim that 
\begin{equation}\label{TAY2}\liminf_{\e\to0^+}
v(\tau_\e)>0.
\end{equation}
Indeed, if not, by~\eqref{TAU0} and~\eqref{TAU},
\begin{equation}\label{TAY3}
0=\liminf_{\e\to0^+} v(\tau_\e)=v(\tau)=w(\tau).
\end{equation}
We observe that this implies that
\begin{equation}\label{1GAMMA}
\gamma\in(0,1).
\end{equation}
Indeed, since~$w$ is a supersolution of~\eqref{09192},
we have that
\begin{eqnarray*}&&
w(t)\ge w(0)\,e^{-kt},\qquad{\mbox{when }}\gamma=1\\
{\mbox{and }}&&w(t)\ge\frac{1}{\left(
\frac1{w^{\gamma-1}(0)}+k(\gamma-1)t
\right)^{\frac1{\gamma-1}}},
\qquad{\mbox{when }}\gamma>1,\end{eqnarray*}
as long as~$w(t)>0$, and so for all~$t>0$.
In particular, we have that~$w(\tau)>0$, in contradiction with~\eqref{TAY3},
and this proves~\eqref{1GAMMA}.

Then, we use that~$v$ is a subsolution of~\eqref{09192} and~\eqref{TAY3}
to write that, for any~$t\in(0,\tau)$,
$$ 
-\frac{v^{1-\gamma}(t)}{1-\gamma}=
\frac{v^{1-\gamma}(\tau)-v^{1-\gamma}(t)}{1-\gamma}=\frac1{1-\gamma}
\int_t^\tau \partial_\rho (v^{1-\gamma}(\rho))\,d\rho
=\int_t^\tau \frac{\partial_t v(\rho)}{v^\gamma(\rho)}\,d\rho\le-k(\tau-t).$$
Therefore, recalling~\eqref{1GAMMA},
$$ v^{1-\gamma}(t)\ge k(1-\gamma)(\tau-t),$$
and thus
\begin{equation}\label{7ygfugv}
v(t)=v(t)\ge \big( k(1-\gamma)(\tau-t)\big)^{1/(1-\gamma)}.\end{equation}
Similarly, using that~$w$ is a supersolution of~\eqref{09192} and~\eqref{TAY3}
we obtain that, for any~$t\in(0,\tau)$,
$$ w(t)\le \big( k(1-\gamma)(\tau-t)\big)^{1/(1-\gamma)}.$$
Comparing this and~\eqref{7ygfugv}, we conclude that
$$ w(0)\le\big( k(1-\gamma)\tau\big)^{1/(1-\gamma)}\le v(0),$$
which is in contradiction with~\eqref{X002}, and so the proof of~\eqref{TAY2}
is complete.

Then, using~\eqref{TAY1}
and~\eqref{TAY2}, a Taylor expansion gives that
\begin{eqnarray*}
\frac{1}{k}&\le& \frac{v^\gamma(\tau_\e)}{\e}\,\left[
\left( 1+\frac{\e\,(\tau-\tau_\e)}{v(\tau_\e)}\right)^\gamma-1\right]
\\&=&\frac{
v^\gamma(\tau_\e)}{\e}\left[
\frac{\gamma\e\,(\tau-\tau_\e)}{v(\tau_\e)}+
O\left( \frac{\e^2\,(\tau-\tau_\e)^2}{v^2(\tau_\e)}\right)
\right]\\&=&
\frac{\gamma\,(\tau-\tau_\e)}{v^{1-\gamma}(\tau_\e)}+
O\left( \frac{\e\,(\tau-\tau_\e)^2}{v^{2-\gamma}(\tau_\e)}\right)
.\end{eqnarray*}
Then, sending~$\e\to0^+$ and recalling~\eqref{TAU}
and~\eqref{TAY2}, we conclude that~$\frac1k\le0$.
This is a contradiction and the proof of~\eqref{X012} is thereby complete.
\end{proof}

With this preliminary work, we are in the position of proving the general
claim stated in Theorem \ref{thm:complex}.

\begin{proof}[Proof of Theorem \ref{thm:complex}] 	
	First, notice that
	\begin{equation} \label{0721}
		{\partial_t |u|^{s}}= s |u|^{s-1} \left(\frac{\Re(u) \partial_{t} \Re(u)+ \Im(u) \partial_{t} \Im(u)}{|u|}  \right) = s|u|^{s-2} \Re\{\bar{u} \, \partial_t u\}.
	\end{equation}
	Using \eqref{0721} and exchanging the order of the integral and the derivative, we have
	\begin{equation}\label{ineq:complex0}
	\begin{split}
		\int_{\Omega} |u|^{s-2} \Re\{\bar{u} \, \partial_t u\} \; dx &= \int_{\Omega}  \frac{\partial_t |u|^{s}}{s} \; dx 
		=\frac{1}{s} \partial_t \int_{\Omega} |u|^s \; dx =  \frac{1}{s} \partial_t \Vert u(\cdot, t) \Vert_{L^{s} (\Omega) }^{s} \\
		& =\Vert u(\cdot, t) \Vert_{L^{s} (\Omega) }^{s-1} \partial_t \Vert u(\cdot, t) \Vert_{L^{s} (\Omega)}.
	\end{split}
	\end{equation}
Now we claim that
	\begin{equation}\label{ineq:complex}
	\Vert u(\.,t) \Vert_{L^{s}(\Omega) }^{s-1} \partial_t^{\alpha} (\Vert u(\.,t) \Vert_{L^{s}(\Omega) }) \leq \int_{\Omega} |u(x,t)|^{s-2} \Re \{ \bar{u}(x,t) \partial_t^{\alpha} (u(x,t)) \} \, dx.
	\end{equation}
	This formula is similar to one given in Corollary 3.1 of \cite{VZ15}
for general kernels. In our setting,
we provide
an easier proof for the case of the Caputo derivative, comprising also the case
of complex valued operators. 
To prove~\eqref{ineq:complex},
using the definition of Caputo derivative we see that
	\begin{equation*}
	\begin{split}
	\int_{\R^n} & |u(x,t)|^{s-2} \Re \{ \bar{u}(x,t)\partial_t^{\alpha} u(x,t) \} \; dx \\
	&=\int_{\Omega} |u(x,t)|^{s-2} \Re \left\{ \bar{u}(x,t) \left[ 
	\dfrac{u(x,t)-u(x,0)}{t^{\alpha}} + \alpha \int_{0}^{t} \dfrac{u(x,t)-u(x,\tau)}{(t-\tau)^{1+\alpha}} \;d\tau
	\right]
	\right\} \; dx \\
	&= \int_{\Omega} |u(x,t)|^{s-2} \bigg( \frac{|u(x,t)|^2 - \Re \{ \bar{u}(x,t) u(x,0) \} }{t^{\alpha}}  \\
	& \hspace{1em} + \alpha \int_{0}^{t} \frac{|u(x,t)|^2 - \Re \{ \bar{u}(x,t) u(x,\tau) \} }{(t-\tau)^{1+\alpha}} \; d\tau \bigg) dx.
	\end{split}
	\end{equation*}	
Hence, by using the H\"older inequality, we get
	\begin{equation*}
	\begin{split}	
	\int_{\R^n} & |u(x,t)|^{s-2} \Re \{ \bar{u}(x,t)\partial_t^{\alpha} u(x,t) \} \; dx \\
	& \geq \frac{ \Vert u(\cdot, t) \Vert_{L^s(\Omega)}^{s} - \Vert u(\cdot, t) \Vert_{L^s(\Omega)}^{s-1} \Vert u(\cdot, 0) \Vert_{L^s(\Omega)}  }{t^{\alpha}} +\alpha \int_{0}^{t} \frac{\Vert u(\cdot, t) \Vert_{L^s(\Omega)}^{s}}{(t-\tau)^{1+\alpha}} \; d\tau \\
	& \hspace{1em} - \alpha \int_{0}^{t} \dfrac{\Vert u(\cdot, t) \Vert_{L^s(\Omega)}^{s-1} \Vert u(\cdot, \tau) \Vert_{L^s(\Omega)}}{(t-\tau)^{1+\alpha}} \; d\tau \\
	& = \Vert u(\cdot, t) \Vert_{L^s(\Omega)}^{s-1} \bigg[ \dfrac{\Vert u(\cdot, t) \Vert_{L^s(\Omega)}-\Vert u(\cdot, 0) \Vert_{L^s(\Omega)}}{t^{\alpha}} \\
	& \hspace{1em} + \alpha \int_{0}^{t} \dfrac{\Vert u(\cdot, t) \Vert_{L^s(\Omega)} - \Vert u(\cdot, \tau) \Vert_{L^s(\Omega)}}{(t-\tau)^{1+\alpha}}\; d\tau  \bigg] \\
	& = \Vert u(\cdot, t) \Vert_{L^s(\Omega)}^{s-1} \partial_t^{\alpha} \Vert u(\cdot, t) \Vert_{L^s(\Omega)}.
	\end{split}
	\end{equation*}
This completes the proof of \eqref{ineq:complex}.
	
Now,	to make the notation simpler, we set $v(t):= \Vert u(\cdot, t) \Vert_{L^{s} (\Omega) }$. By combining \eqref{ineq:complex0} and~\eqref{ineq:complex}, we find that 
	\begin{equation*}
		v^{s-1}(t) \left( \lambda_1 \partial_t^{\alpha} v(t) + \lambda_2 \partial_t v(t) \right) \leq \int_{\Omega} |u|^{s-2}(x,t) \Re \left\{\bar{u}(x,t) \left( \lambda_1 \partial_t^{\alpha} u(x,t) +\lambda_2 \partial_t u(x,t) \right) \right\} dx
	\end{equation*}
	and so, using the fact that $u$ is a solution of~\eqref{sys:generalform}, we conclude that
	\begin{equation*}
		v^{s-1}(t) \left( \lambda_1 \partial_t^{\alpha} v(t) + \lambda_2 \partial_t v(t) \right) \leq - \int_{\Omega} |u|^{s-2}(x,t) \Re \{\bar{u}(x,t)  \mathcal{N}[u](x,t)\}  dx.
	\end{equation*}
{F}rom this, we use the structural hypothesis \eqref{cond:complexstr} and we obtain that
	\begin{equation*}
		v^{s-1}(t) \left( \lambda_1 \partial_t^{\alpha} v(t) + \lambda_2 \partial_t v(t) \right) \leq - \frac{v^{s-1+\gamma}(t)}{C}.
	\end{equation*}
	Hence, we have established
the claim in~\eqref{claim1gen} 
for all $t>0$ such that $v(t)>0$.
Then, suppose that for some $\bar{t}>0$ we have $v(\bar{t})=0$. Since~$v$ is nonnegative,
it follows that
\begin{equation}\label{210718}
\partial_t v(\bar{t})=0.
\end{equation}
On the other hand, if $v(t)=0$, then
\begin{equation}\label{1321}
{ \partial_t^{\alpha} v(t)
\le 0},\end{equation}
because
	\begin{equation*}
		\partial_t^{\alpha} v(t)= \frac{v(t)-v(0)}{t^{\alpha}} + \int_{0}^{t} \frac{v(t)-v(\tau)}{(t-\tau)^{1+\alpha}} d\tau \leq -\frac{v(0)}{t^{\alpha}} - \int_{0}^{t} \frac{v(\tau)}{(t-\tau)^{1+\alpha}} d\tau \leq 0.
	\end{equation*}  
So, by~\eqref{210718} and~\eqref{1321},
$\left( \lambda_1 \partial_t^{\alpha} v(\bar t) + \lambda_2 \partial_t v(\bar t) \right) \leq 0$, which gives~\eqref{claim1gen} also in this case, as desired.
	
Now we exhibit a supersolution $w(t)$ of the equation $(\lambda_1 \partial_t^{\alpha} + \lambda_2 \partial_t) v(t) = -\nu v^{\gamma}(t)$, where $\nu:=\frac{1}{C}$. 
For this, we recall
Section 7 of \cite{VZ15}, and we have that the function 
	\begin{equation*}
		w(t):= \left\{
		\begin{array}{ll}
		u_0 & {\mbox{if }} t\in [0,t_0], \\
		Kt^{-\frac{\alpha}{\gamma}} & {\mbox{if }}t\geq t_0, 
		\end{array}
		\right.
	\end{equation*}
	with $K:=u_0t_0^{\frac{\alpha}{\gamma}}$ is a supersolution of~$
\partial_t^{\alpha} w(t) = -\nu w^{\gamma}(t)$
as long as
	\begin{equation*}
		t_0 \geq \dfrac{u_0^{1-\gamma}}{\nu} \left(\frac{2^{\alpha}}{\Gamma(1-\alpha)} + \frac{\alpha}{\gamma} \frac{2^{\alpha + \frac{\alpha}{\gamma}}}{\Gamma(2-\alpha)}  \right).
	\end{equation*}
	We claim that $\partial_t w(t) \geq -\nu w^{\gamma} (t)$.
To prove this, it is equivalent to check that
	\begin{equation*}
		\frac{\alpha}{\gamma} u_0 \, t_0^{\frac{\alpha}{\gamma}} \, t^{-\frac{\alpha}{\gamma}-1} \leq \nu \, u_0^{\gamma} \, t_0^{{\alpha}} \, t^{-\alpha}
	,\end{equation*}
	which is in turn equivalent to
	\begin{equation*}
		\frac{\alpha}{\gamma \, \nu} u_0^{1-\gamma} \, t_0^{ \frac{\alpha}{\gamma} -\alpha} \leq t^{1+\frac{\alpha}{\gamma} -\alpha},
	\end{equation*}
and the latter equation holds if $$t_0 \geq \max \left\{ 1, \frac{\alpha}{\gamma  \nu} u_0^{1-\gamma} \right\}. $$ Therefore for $t_0$ big enough we have that $w(t)$ is a supersolution of the equation $(\lambda_1 \partial_t^{\alpha} + \lambda_2 \partial_t) v(t) = -\nu v^{\gamma}(t)$. Also, $w(t)$ satisfies
	\begin{equation*}
		w(t)\leq \frac{c}{1+t^{\frac{\alpha}{\gamma}}}
	\end{equation*}
	for some $c>0$ depending only on $\nu,\ \gamma, \ \alpha$ and $w(0)$. Hence by the comparison principle in Lemma \ref{lemma:comparison}, we infer that $v(t) \leq w(t)$, which
completes the proof of the desired result in~\eqref{claim2gen}.	
\end{proof}

\begin{proof}[Proof of Theorem \ref{thm:classic}]
	The proof is identical to the one of Theorem \ref{thm:complex} a
part from the construction of the supersolution
(and from the use of the comparison principle in Lemma~\ref{lemma:comparison2}
rather than in Lemma~\ref{lemma:comparison}). Our aim is now to find a supersolution to the equation \eqref{claim1gen} in the case $\lambda_1=0$, that we can write as
	\begin{equation}\label{CAS1}
		v'(t) = -\frac{1}{C}v^{\gamma}(t)
	\end{equation}
	where $C$ is the constant given in the hypothesis. To construct this supersolution,
we distinguish the cases~$0<\gamma \leq 1$
and~$\gamma>1$.
	
We define
\begin{equation}\label{79}
w_0:=\Vert u_0(\cdot) \Vert_{L^{s} (\Omega) },
\end{equation}

\begin{equation}
t_0:=\left\{\begin{matrix}
0 & {\mbox{if }}\gamma=1,\\ \max\left\{0, \ \frac{C}{1-\gamma}(w_0^{1-\gamma}-1) \right\}& {\mbox{if }}0<\gamma<1,
\end{matrix}
\right.\end{equation}
and 
\begin{equation}\label{theta1}
	\theta_0= \left(w_0-\dfrac{(1-\gamma)}{C}t_0\right).
\end{equation}
Notice that, for $0<\gamma<1$ 
\begin{equation}\label{theta}
	\theta_0 \leq 1.
\end{equation}
In fact, 
\begin{equation*}
	\frac{C}{1-\gamma} (w_0^{1-\gamma}-1) \leq t_0
\end{equation*}
implies
\begin{equation*}
	\left( w_0^{1-\gamma} - \frac{(1-\gamma)}{C}t_0  \right) \leq 1
\end{equation*}
and that proves \eqref{theta}.
Then, we see that the function
	\begin{equation}\label{97}
		w(t):= \left\{ 
		\begin{array}{lr}
		\left(w_0^{1-\gamma}-\dfrac{(1-\gamma)t}{C} \right)^{\frac{1}{1-\gamma}}, & {\mbox{if }}t\in[0,t_0] \\
		\theta_0  \,e^{\frac{t_0-t}{C}}, & {\mbox{if }}t\in(t_0, +\infty)
		\end{array}
		\right.
	\end{equation}
is a continuous and Lipschitz function, moreover it is a solution of~\eqref{CAS1}
in the case $\gamma=1$ and a supersolution of~\eqref{CAS1} in the case $0 <\gamma<1$. Indeed, to check this, we observe that, for $t\in[0, t_0]$,
\begin{eqnarray*}
	&& \hspace{-1em}  w'(t)+\frac1{C}w^{\gamma}(t)) \\
	&& \hspace{3em} = -\dfrac{1}{C}\left( w_0^{1-\gamma} -\dfrac{(1-\gamma)t}{C} \right)^{\frac{\gamma}{1-\gamma}}  + \dfrac1{C}\left( w_0^{1-\gamma} -\dfrac{(1-\gamma)t}{C} \right)^{\frac{\gamma}{1-\gamma}} \\
	&& \hspace{3em}  =0,
\end{eqnarray*}
while
for all~$t>t_0$,
\begin{eqnarray*}
&& C\left( w'(t)+\frac1{C}w^\gamma(t)\right)=
-\theta_0 e^{\frac{(t_0-t)}{C}}
+\theta_0^\gamma  e^{\frac{\gamma(t_0-t)}{C}}=
\theta_0^\gamma  e^{\frac{\gamma(t_0-t)}{C}}\left(1
-\theta_0^{1-\gamma} e^{\frac{(1-\gamma)(t_0-t)}{C}}\right)\\&&\qquad\ge
\theta_0^\gamma  e^{\frac{\gamma(t_0-t)}{C}}\left(1
-\theta_0^{1-\gamma} \right)\ge0,
\end{eqnarray*}
where the inequality holds thanks to \eqref{theta}. 
Notice also that the function $w$ is Lipschitz since it is piecewise continuous and derivable and it is continuous in the point $t=t_0$ because of the definition of $\theta$ given in \eqref{theta1}.
These observations establish
the desired supersolution properties for the
function in~\eqref{97} for~$0<\gamma\le1$.
{F}rom this and the comparison result
in Lemma \ref{lemma:comparison2}, used here with $w(t)$ and $v(t):= \Vert u(\cdot, t) \Vert_{L^{s} (\Omega) } $, we obtain that
$v(t)\le w(t)$ for any~$t\ge0$, and in particular,
\begin{equation}\label{99}
\Vert u(\cdot, t) \Vert_{L^{s} (\Omega) }\le
K e^{-\frac{t}{C}}
\qquad{\mbox{for any~$t>t_0$}}
\end{equation}
for $K:= \theta_0 e^{\frac{t_0}{C}}$.
This proves~\eqref{claim3}. 
\medskip

Now we deal with the case $\gamma>1$. In this case, we set $$ w_0:=\max \left\{\Vert u_0(\cdot) \Vert_{L^{s} (\Omega) }, \Big( \frac{C}{\gamma -1} \Big)^{\frac{1}{\gamma-1}}  \right\}. $$
	Then the function
	\begin{equation}\label{992}
	w(t):= \left\{ 
	\begin{array}{lr}
	w_0  , & {\mbox{if }}t\in[0,1] \\
	w_0  t^{-\frac{1}{\gamma-1}}, & {\mbox{if }}t>1
	\end{array}
	\right.
	\end{equation}
	is a supersolution of~\eqref{CAS1}. Indeed, if~$t>1$,
\begin{eqnarray*}
C\left( w'(t)+\frac1{C}w^\gamma(t)\right)=
-\frac{C}{\gamma-1} w_0 t^{-\frac{\gamma}{\gamma-1}}
+w_0^\gamma  t^{-\frac{\gamma}{\gamma-1}}=
w_0 t^{-\frac{\gamma}{\gamma-1}}\left(
w_0^{1-\gamma}-\frac{C}{\gamma-1}
\right)\ge0,
\end{eqnarray*}
while, if~$t\in(0,1)$,
$$ w'(t)+\frac1{C}w^\gamma(t)=\frac1{C}w^\gamma(t)\ge0.$$
This gives that the function in~\eqref{992} has the desired supersolution property and consequently we can apply the comparison result
in Lemma \ref{lemma:comparison2} with $w(t)$ and $v(t):= \Vert u(\cdot, t) \Vert_{L^{s} (\Omega) } $. In this way, we obtain that for all~$t\ge1$
$$ \Vert u(\cdot, t) \Vert_{L^{s} (\Omega) }\le 
w_0  t^{-\frac{1}{\gamma-1}},$$
and so
the proof of~\eqref{claim4}
is complete.
\end{proof}

Now, we present the applications of the abstract results to the operators introduced in Section \ref{applications}.

We start with the case of the fractional porous medium equation.

\begin{proof}[Proof of Theorem~\ref{thm:porous}]
In order to prove Theorem \ref{thm:porous}, our strategy is to verify the
validity of inequality \eqref{cond:complexstr} with~$\gamma:=2$
for the porous medium operator,
which would put us in the position of exploiting Theorems~\ref{thm:complex}
and~\ref{thm:classic}.

To this end, by elementary computations, up to changes of the positive constant $c$ depending on $n, \ s,$ and $ \sigma$, we see that
\begin{equation}\label{110}
	\begin{split}
	\int_{\Omega} u^{s-1}(x,t)\mathcal{N}[u](x,t) \; dx   &=\int_{\Omega} - u^{s-1} \nabla \cdot (u \nabla \mathcal{K} u)(x,t) \; dx \hspace{10em} \\
	&= \int_{\Omega} (s-1) u^{s-1}(x,t) \nabla u(x,t) \cdot \nabla \mathcal{K} u (x,t) dx \\
	&= \int_{\Omega} \nabla u^{s}(x,t) \cdot  \nabla \mathcal{K} u (x,t) \,dx
\end{split}
\end{equation}
Now, define for $\e>0$, the regularized operator
\begin{equation}
	\mathcal{K}_{\e}u= \int_{\Omega}c(n,\sigma) \frac{u(x-y,t)}{(|y|^2+\e^2)^{\frac{n-2\sigma}{2}}} dy.
\end{equation}
where $c(n,\sigma)$ is the same constant that appears in the definition of $\mathcal{K}$ in~\eqref{kappa}.
Notice that, since $u$ is regular, we have 
\begin{multline}\label{conv}
	\int_{\Omega} \nabla u^{s}(x,t) \cdot \nabla \mathcal{K}_{\e} u (x,t) \,dx \\
	 \leq \iint_{\R^n \times \R^n}
	 \frac{\chi_{\Omega}(x) \underset{x\in\Omega}{\sup} |\nabla u^s(x,t) | \, \chi_{\Omega}(x-y)\underset{(x-y)\in\Omega}{\sup} |\nabla u(x-y,t)|}{|y|^{n-2\sigma}} dxdy
\end{multline}
where $\chi$ is the characteristic function. Thus, thanks to~\eqref{conv} we can apply the
Dominated Convergence Theorem and obtain
\begin{equation} \label{limit}
	\underset{\e\rightarrow 0}{\lim} \int_{\Omega} \nabla u^{s}(x,t) \cdot  \nabla \mathcal{K}_{\e} u (x,t) \,dx  = \int_{\Omega} \nabla u^{s}(x,t) \cdot  \nabla \mathcal{K} u (x,t) \,dx.
\end{equation}
So, using~\eqref{110} and~\eqref{limit}, we have
\begin{equation}\label{lim}
\begin{split}
	\int_{\Omega} u^{s-1}(x,t)\mathcal{N}[u](x,t) \; dx   &=\underset{\e\rightarrow 0}{\lim} \int_{\Omega} \nabla u^{s}(x,t) \cdot  \nabla \mathcal{K}_{\e} u (x,t) \,dx \\
	&=\underset{\e\rightarrow 0}{\lim} \int_{\Omega} \nabla u^{s}(x,t) \cdot \int_{\Omega}  \frac{(-n+2\sigma)c(n,\sigma)u(y)}{(|x-y|^2+\e^2)^{\frac{n-2\sigma+2}{2}}} (x-y) dy  \,dx \\
	&=\underset{\e\rightarrow 0}{\lim} \iint_{\Omega\times\Omega}  \dfrac{c(n,\sigma) u(y,t)\nabla u^{s}(x,t) \cdot (y-x) }{(|x-y|^2+\e^2)^{\frac{n-2\sigma+2}{2}}} \; dy \,dx,
\end{split}	
\end{equation}
up to changes of the positive constant $c(n,\sigma)$.	
Now we adapt a method that was introduced in \cite{porous} to obtain $L^p$ estimates. We exchange the order of integration and have that
\begin{equation*}
	\begin{split}
	\iint_{\R^n}  c\, u(y,t) \dfrac{\nabla u^{s}(x,t) \cdot (y-x) }{(|x-y|^2+\e^2)^{\frac{n-2\sigma+2}{2}}} \; dx \,dy \\
	&  \hspace{-12em} = \iint_{\R^n}  c\, u(y,t) \dfrac{\nabla (u^{s}(x,t)-u^s(y,t)) \cdot (y-x) }{(|x-y|^2+\e^2)^{\frac{n-2\sigma+2}{2}}} \; dx \,dy \\
	&  \hspace{-12em} = \iint_{\R^n}  -c {(u^{s}(x,t)-u^s(y,t))u(y,t)} \Bigg[\dfrac{-n}{(|x-y|^2+\e^2)^{\frac{n-2\sigma+2}{2}}} \\ &
	\hspace{-9em }+\frac{(n-2\sigma+2)|x-y|^2}{(|x-y|^2+\e^2)^{\frac{n-2\sigma+4}{2}}}  \Bigg] dx \,dy \\
	&  \hspace{-12em} = \iint_{\R^n} c \frac{(u^{s}(x,t)-u^s(y,t))(u(x,t)-u(y,t))}2\Bigg[\dfrac{-n}{(|x-y|^2+\e^2)^{\frac{n-2\sigma+2}{2}}} \\ &
	\hspace{-9em }+\frac{(n-2\sigma+2)|x-y|^2}{(|x-y|^2+\e^2)^{\frac{n-2\sigma+4}{2}}}  \Bigg] dx \,dy.
	\end{split}
\end{equation*}
We observe now that, since $(u^{s}(x,t)-u^s(y,t))(u(x,t)-u(y,t))$ is always positive,  
\begin{equation*}
	\begin{split}
	& \hspace{-3em}\iint_{\R^n} c \frac{(u^{s}(x,t)-u^s(y,t))(u(x,t)-u(y,t))}2\Bigg[\dfrac{-n}{(|x-y|^2+\e^2)^{\frac{n-2\sigma+2}{2}}}  \\
	&+\frac{(n-2\sigma+2)|x-y|^2}{(|x-y|^2+\e^2)^{\frac{n-2\sigma+4}{2}}}  \Bigg] dx \,dy \\
	&  \leq \iint_{\R^n} c \frac{(u^{s}(x,t)-u^s(y,t))(u(x,t)-u(y,t))(2-2\sigma)}{2|x-y|^{n+2(1-\sigma)}} dx \,dy.
	\end{split}
\end{equation*}
Thus, again by the Dominated Convergence Theorem, we can pass to the limit in \eqref{lim} and obtain
\begin{equation}\label{111}
\begin{split}
	&\hspace{-1.5em}\int_{\Omega} u^{s-1}(x,t)\mathcal{N}[u](x,t) \; dx \\
	& \hspace{1.5em}=\iint_{\R^n} c \frac{(u^{s}(x,t)-u^s(y,t))(u(x,t)-u(y,t))(2-2\sigma)}{2|x-y|^{n+2(1-\sigma)}} dx \,dy.
\end{split}
\end{equation}
Now, we define $v(x,t)=u^{\frac{s+1}{2}}(x,t)$. Then, by inequality (2.15) of \cite{SD.EV.VV} we have, for some $C>0$,
\begin{equation*}
	C (u^s(x,t)-u^s(y,t))(u(x,t)-u(y,t) ) \geq |v(x,t)-v(y,t)|^2.
\end{equation*}
{F}rom this, \eqref{110} and~\eqref{111} we obtain that
\begin{equation}\label{112}
	\begin{split}&
C	\int_{\Omega} u^{s-1}(x,t)\mathcal{N}[u](x,t) \; dx  \\
&\qquad=
\iint_{\R^n} c\,C\, \dfrac{2-2s}{2} \dfrac{(u^{s}(x,t)-u^s(y,t))(u(x,t)-u(y,t))}{|x-y|^{n+2(1-\sigma)}} \;dx \,dy\\&\qquad
\ge
\iint_{\R^n} c\, \dfrac{2-2s}{2} \dfrac{|v(x,t)-v(y,t)|^2}{|x-y|^{n+2(1-\sigma)}} \;dx \,dy.\end{split}\end{equation}
Now we set $z:=(1-s)$; then $z\in(0,1)$ and $n\geq 2z$. Let also
$$p_z:= \dfrac{2n}{n-2z} \geq 2. $$
Then for any $q\in [2, p_z]$ we can apply the Gagliardo-Sobolev-{S}lobedetski\u\i \
fractionary inequality (compare \cite{guida}, Theorem 6.5) and obtain
\begin{equation}\label{113}
	\left( \int_{\Omega} u^{\frac{s+1}{2}q} \right)^{\frac{2}{q}}
	= \Vert v \Vert_{L^{q} (\Omega) }^2
	 \leq C \iint \dfrac{|v(x,t)-v(y,t)|^2}{|x-y|^{n+2z}} \; dxdy
\end{equation}
with $C$ depending only on $\Omega,\ n,\ z$ and $q$. 
In particular, choosing $q=2$, we deduce from~\eqref{113} that
\begin{equation}\label{091a}
\Vert u(\cdot, t) \Vert_{L^{s+1}(\Omega) }^{s+1}
	 \leq C \iint \dfrac{|v(x,t)-v(y,t)|^2}{|x-y|^{n+2z}} \; dxdy
\end{equation}
On the other hand, using the H\"{o}lder inequality, one has that
\begin{equation*}
	\Vert u(\cdot, t) \Vert_{L^{s}(\Omega) }^{s+1} \leq \Vert u(\cdot, t) \Vert_{L^{s+1}(\Omega)}^{s+1} |\Omega|^{1/s}.
\end{equation*}
Combining this and~\eqref{091a}, we obtain
\begin{equation*}
\Vert u(\cdot, t) \Vert_{L^{s+1}(\Omega) }^{s}
	 \leq C \iint \dfrac{|v(x,t)-v(y,t)|^2}{|x-y|^{n+2z}} \; dxdy,
\end{equation*}
up to renaming~$C>0$.
This and~\eqref{112} establish the validity
of \eqref{cond:complexstr} for $\gamma:=2$, as desired.
\end{proof}

Now we focus on the Kirchhoff equation, first dealing with the case
of classical derivatives.

\begin{proof}[Proof of Theorem \ref{thm:cl_kirchhoff}]
Our objective here is to verify the
validity of inequality \eqref{cond:complexstr} for suitable values
of~$\gamma$, and then make use of Theorems~\ref{thm:complex}
and~\ref{thm:classic}.

First we present the proof for the non-degenerate case, that takes place when $m(\xi)$ has a positive minimum. Let us call $m_0:=\min m(\xi)$, then
\begin{equation}\label{331}
	m \left(\Vert \nabla u \Vert_{L^2(\Omega)}\right) \int_{\Omega} |u|^{s-2}u (-\Delta)u \; dx \geq m_0	\int_{\Omega} |u|^{s-2}u (-\Delta)u \; dx.
\end{equation} 
In Theorem 1.2 of~\cite{SD.EV.VV}, the case of the Laplacian was considered:
there it was found that, for some $C>0$ depending on $s,\ n,\ \Omega$,
\begin{equation*}
	\int_{\Omega} |u|^{s-2}u (-\Delta)u \; dx \geq C \Vert u \Vert_{L^{s} (\Omega) }^s.
\end{equation*}
Combining this with~\eqref{331}
we see that \eqref{cond:complexstr} holds true for $\gamma=1$ and $C> 0$ depending on $s,\ n,\ \Omega, \ \min m(\xi)$. 

Now we deal with
the degenerate case, which requires the use of
finer estimates. In this case, we have that
\begin{equation}\label{0909}\begin{split}
	& \hspace{-3em} b \Vert \nabla u \Vert_{L^2(\Omega)}^2  \int_{\Omega} |u(x,t)|^{s-2} u(x,t)(-\Delta)u (x,t) \; dx\\
	& \hspace{3em} = b \Vert \nabla u \Vert_{L^2(\Omega)}^2 \int_{\Omega} |u(x,t)|^{s-2} |\nabla u (x,t)|^2 \; dx \\
	& \hspace{3em} \geq C \left(\int_{\Omega} |u(x,t)|^{\frac{s-2}{2}} |\nabla u(x,t)|^2 \; dx\right)^2
,\end{split}\end{equation}
where the first passage is an integration by parts and the last inequality holds in view of the Cauchy-Schwartz inequality. 

Now define \begin{equation}\label{992k}
v(x,t):=|u|^{\frac{s+2}{4}}(x,t).\end{equation} We have that
$$ |\nabla v|^2 = \left( \frac{s+2}{4} \right)^2 |u|^{\frac{s-2}{2}} |\nabla u|^2. $$
This and~\eqref{0909} give that
\begin{equation}\label{781-b}
\begin{split}
&\left( \frac{s+2}{4} \right)^4
b \Vert \nabla u \Vert_{L^2(\Omega)}^2  \int_{\Omega} |u(x,t)|^{s-2} u(x,t)(-\Delta)u (x,t) \; dx\\
\ge\,&
C \left(\int_{\Omega} \left( \frac{s+2}{4} \right)^2|u(x,t)|^{\frac{s-2}{2}} |\nabla u(x,t)|^2 \; dx\right)^2\\
=\,&
C \left(\int_{\Omega} |\nabla v(x,t)|^2 \; dx\right)^2.
\end{split}\end{equation}
We now use Sobolev injections (in the form
given, for instance, in formula (2.9) of~\cite{SD.EV.VV}), remembering that $v$ is zero outside $\Omega$. The inequality
\begin{equation}\label{781-a}
\Vert \nabla v \Vert_{L^2(\Omega)} \geq C \Vert v \Vert_{L^q(\Omega)} \end{equation}
holds 
\begin{equation}\label{PER781-a}
{\mbox{for all $q\geq 1$ if $n\in\{1, 2\}$,
and for all~$q\in\left[1,\displaystyle\frac{2n}{n-2}\right]$ if $n>2$.}}\end{equation}
Therefore, we set
\begin{equation}\label{PER781-b} q:=\frac{4s}{s+2}.\end{equation}
Recalling the ranges of~$s$ in claim~(iii) of Theorem~\ref{thm:cl_kirchhoff},
when~$n>2$ we have that
$$ (n-2) q-2n=\frac{4s(n-2)}{s+2}-2n=\frac{2}{s+2}\,\big(
(n-4)s-2n
\big)\le0,$$
which shows that the definition in~\eqref{PER781-b}
fulfills the conditions in~\eqref{PER781-a}, and so~\eqref{781-a}
is valid in this setting.

Hence, making use of~\eqref{992k}, \eqref{781-b} and \eqref{781-a}, up to renaming~$C$ line after line, we deduce that
\begin{eqnarray*}
&&b \Vert \nabla u \Vert_{L^2(\Omega)}^2  \int_{\Omega} |u(x,t)|^{s-2} u(x,t)(-\Delta)u (x,t) \; dx\\
&&\qquad\ge
C \|\nabla v(\cdot,t)\|^4_{L^2(\Omega)}
\ge C\|v\|_{L^q(\Omega)}^4= C\|u\|_{L^s(\Omega)}^{{s+2}}.
\end{eqnarray*}
These observations imply that
condition \eqref{cond:complexstr} is satisfied here
with $\gamma=3$ and $C$ depending on $s,\ m(\xi)$ and $\Omega$.
\end{proof}

Now we deal with the case of the fractional Kirchhoff equation.

\begin{proof}[Proof of Theorem~\ref{thm:Kirchhoff}]
As in the case of classical space-derivatives
dealt with in the proof of
Theorem~\ref{thm:cl_kirchhoff},
a quick proof for the non-degenerate case is available. Indeed,
\begin{equation*}
	\int_{\Omega} |u|^{s-2} u \mathcal{N}[u] \, dx =
	m \left(\Vert \nabla u\Vert_{L^{2} (\Omega) }^2 \right) \int_{\Omega} |u|^{s-2} u (-\Delta)^{\sigma}u \, dx  \geq \int_{\Omega} m_0 |u|^{s-2}u(-\Delta)^{\sigma}u \, dx
\end{equation*}
and in \cite{SD.EV.VV} it was shown that 
\begin{equation*}
	\int_{\Omega} m_0 |u|^{s-2}u(-\Delta)^{\sigma}u \, dx \geq \Vert u \Vert_{L^{s} (\Omega) }^s.
\end{equation*}
Thus, the validity of inequality \eqref{cond:complexstr} with~$\gamma=1$
is established in this case.
 	
We now deal with the degenerate case. 
We fix 
\begin{equation}\label{pge2si}
p\in[2, +\infty)\end{equation}
and we define
\begin{equation}\label{18}
r:= \frac{s+2}{2p}\qquad{\mbox{and}}\qquad
v(x,t):=|u(x,t)|^r.\end{equation} We claim that
\begin{equation} \label{kirch:claim1}
\begin{split}
&|v(x,t)-v(y,t)|^p  \\
& \hspace{2em}\leq c_0 |u(x,t)-u(y,t)|\sqrt{(u(x,t)-u(y,t)) (|u(x,t)|^{s-2}u(x,t) -|u(y,t)|^{s-2}u(y,t))}
\end{split}
\end{equation}
for some $c_0> 0$, independent of $u$. To prove this, we first observe
that the radicand in~\eqref{kirch:claim1} is well defined, since, for every~$a$, $b\in\R$ we have that
\begin{equation}\label{DO1}
(a-b) (|a|^{s-2}a -|b|^{s-2}b)\ge0.
\end{equation}
To check this, up to exchanging~$a$ and~$b$, we can suppose that~$a\ge b$.
Then, we have three cases to take into account: either~$a\ge b\ge0$,
or~$a\ge 0\ge b$, or~$0\ge a\ge b$.
If~$a\ge b\ge0$, we have that
$$ |a|^{s-2}a -|b|^{s-2}b= a^{s-1} -b^{s-1}\ge0,$$
and so~\eqref{DO1} holds true. If instead~$a\ge 0\ge b$, we have that
$$ |a|^{s-2}a -|b|^{s-2}b=|a|^{s-1} +|b|^{s-1}\ge0,$$
which gives~\eqref{DO1}
in this case. Finally, if~$0\ge a\ge b$,
$$ |a|^{s-2}a -|b|^{s-2}b=-|a|^{s-1} +|b|^{s-1}\ge0,$$
again since~$-|a|=a\ge b=-|b|$, thus completing the proof of~\eqref{DO1}.

Then, by~\eqref{DO1}, we have that~\eqref{kirch:claim1}
is equivalent to 
\begin{equation}\label{kirch:claimE}
	|v(x,t)-v(y,t)|^{2p} \leq c_1 (u(x,t)-u(y,t))^3{ (|u(x,t)|^{s-2}u(x,t) -|u(y,t)|^{s-2}u(y,t))}.
\end{equation}
We also note that when $u(x,t)=u(y,t)$ the inequality in~\eqref{kirch:claimE}
is trivially satisfied.
Hence, without loss of generality we can suppose that
\begin{equation}\label{WAG01la}
{\mbox{$|u(x,t)|>|u(y,t)|$,\; for fixed $x,\ y \in \R^n$.}}\end{equation}
We define
the function
\begin{equation}\label{EA2}
(-1,1)\ni\lambda\mapsto 	g (\lambda)=\frac{(1- |\lambda|^{\frac{s+2}{2p}})^{2p}}{(1-\lambda)^3(1-|\lambda|^{s-2}\lambda)}
\end{equation}
and we claim that
\begin{equation}\label{EA20}
\sup_{(-1,1)}g(\lambda) < +\infty.
\end{equation}
To this end, we point out that
$g$ is regular
for all $\lambda\in (-1,1)$, so, to establish~\eqref{EA20}, we only have to study the
limits of~$g$ for $\lambda \rightarrow -1^+$ and $\lambda \rightarrow 1^-$.

When~$\lambda \rightarrow -1^+$, this limit is immediate and $g(-1)=0$. On the other hand,
when~$\lambda \rightarrow 1^-$, we see that
\begin{equation*}
	\begin{split}
	\underset{\lambda\rightarrow 1^-}{\lim} g(\lambda) &= \underset{\varepsilon\rightarrow 0^+}{\lim} \frac{(1- (1-\e)^{\frac{s+2}{2p}})^{2p}}{(1-(1-\varepsilon))^3(1-(1-\varepsilon)^{s-1})} \\
	&= \underset{\varepsilon\rightarrow 0^+}{\lim} \frac{\left(
\frac{s+2}{2p}\varepsilon+O(\varepsilon^2)\right)^{2p}}{\varepsilon^3((s-1)\varepsilon + O(\varepsilon^2))} \\
	&=\underset{\varepsilon\rightarrow 0^+}{\lim} 
\frac{\varepsilon^{2p-4}\,\left(
\frac{s+2}{2p}+O(\varepsilon)\right)^{2p}}{(s-1 + O(\varepsilon))},
	\end{split}
\end{equation*}
which is finite, thanks to~\eqref{pge2si}.
Then~\eqref{EA20} holds true, as desired.

Then, using~\eqref{EA20} with~$\lambda:=\frac{b}{a}$, we have that 
\begin{equation}\label{WAG01la2}
\begin{split}
&{\mbox{for any~$a$, $b\in\R$ with $|a|>|b|$,}}\\
&\frac{\left(
\left|a\right|^{\frac{s+2}{2p}}
- \left|b\right|^{\frac{s+2}{2p}}\right)^{2p}}{
\left(a-b\right)^3\left(
\left|a\right|^{s-2}a
-\left|b\right|^{s-2}b\right)}
\;=\;
\frac{|a|^{s+2}}{|a|^{s-2}\;a^4}\cdot \frac{\left(1- \left|\frac{b}{a}\right|^{\frac{s+2}{2p}}\right)^{2p}}{
\left(1-\frac{b}{a}\right)^3\left(1-\left|\frac{b}{a}\right|^{s-2}
\frac{b}{a}\right)}\\&\qquad
\; =\;
\frac{(1- |\lambda|^{\frac{s+2}{2p}})^{2p}}{
(1-\lambda)^3(1-|\lambda|^{s-2}\lambda)}
\; =\;g(\lambda)\le C,
\end{split}\end{equation}
for some~$C>0$. Then, in view of~\eqref{WAG01la},
we can exploit~\eqref{WAG01la2}
with~$a:=u(x,t)$ and~$b:=u(y,t)$, from which we obtain that
\begin{eqnarray*}&& 
\left|
\left|u(x,t)\right|^{\frac{s+2}{2p}}
- \left|u(y,t)\right|^{\frac{s+2}{2p}}\right|^{2p}\;=\;
\left(
\left|u(x,t)\right|^{\frac{s+2}{2p}}
- \left|u(y,t)\right|^{\frac{s+2}{2p}}\right)^{2p}\\ &&\qquad\;\leq \;C\,
\left(u(x,t)-u(y,t)\right)^3\left(
\left|u(x,t)\right|^{s-2}u(x,t)
-\left|u(y,t)\right|^{s-2}u(y,t)\right).\end{eqnarray*}
This and~\eqref{18} 
imply~\eqref{kirch:claim1}, as desired.

Now, fixed
$p$ as in~\eqref{pge2si},
we set
\begin{equation}\label{8iswdjc8383}
z:=\frac{ 2\sigma}{p}\in(0,\sigma]\subset(0,1).\end{equation}
We apply the Gagliardo-Sobolev-{S}lobedetski\u\i \
fractional immersion (for instance, in the version given
in formula~(2.18) of~\cite{SD.EV.VV}) to $v$. 
In this way,
\begin{equation}\label{202020}
{\mbox{for all $q\in[1,+\infty)$ when $n\le zp$, and for all $q\in\left[1,
\displaystyle\dfrac{np}{n-zp}\right]$ when~$n>zp$,}}
\end{equation}
we have that
\begin{equation}\label{20}\begin{split}
\Vert u(\cdot,t) \Vert_{L^{\frac{(s+2)q}{2p}}(\Omega)}^{\frac{s+2}2}=
	\Vert v(\cdot,t) \Vert_{L^q(\Omega)}^p &\,\leq C \iint_{\R^{2n}}
	\frac{|v(x,t)-v(y,t)|^p}{|x-y|^{n+zp}} dxdy\\&=
	C \iint_{\R^{2n}}
	\frac{|v(x,t)-v(y,t)|^p}{|x-y|^{n+2\sigma}} dxdy,\end{split}
\end{equation}
where the first equality comes from~\eqref{18} and the
latter equality is a consequence of~\eqref{8iswdjc8383}.

Now we choose
\begin{equation}\label{LAq}
p:=\max\left\{ 2,\,\frac{s+2}{2}\right\}\qquad{\mbox{and}}\qquad
 q:=\frac{2sp}{s+2}.\end{equation}
Notice that condition~\eqref{pge2si} is fulfilled in this setting.
Furthermore, recalling~\eqref{8iswdjc8383} and
the assumptions in point~(iii)
of Theorem~\ref{thm:Kirchhoff}, we have that, when~$n>2\sigma=zp$,
we have
\begin{eqnarray*}
&&(n-zp)q-np=
\frac{2(n-2\sigma)sp}{s+2}-np=
\frac{p}{s+2}\,\big(
2(n-2\sigma)s-n(s+2)
\big)\\
&&\qquad=
\frac{p}{s+2}\,\big(
s(n-4\sigma)-2n
\big)\le0.
\end{eqnarray*}
As a consequence, we have that condition~\eqref{202020}
is fulfilled the setting prescribed by~\eqref{LAq},
hence we can exploit~\eqref{20} in this framework.

Then, from~\eqref{LAq} we have that
$$ \frac{(s+2)q}{2p}=s,$$
and so~\eqref{20} gives that
$$ \Vert u(\cdot,t) \Vert_{L^{s}(\Omega)}^{\frac{s+2}2}\le
	C \iint_{\R^{2n}}
	\frac{|v(x,t)-v(y,t)|^p}{|x-y|^{n+2\sigma}} dxdy.$$
Hence, recalling~\eqref{kirch:claim1}, up to renaming~$C>0$,
we have that
\begin{equation}\label{COM234520Aiekwd}
\begin{split} &\Vert u(\cdot,t) \Vert_{L^{s}(\Omega)}^{s+2}\\ \le\,&
	C\left( \iint_{\R^{2n}}
	\frac{|u(x,t)-u(y,t)|\sqrt{(u(x,t)-u(y,t))
	(|u(x,t)|^{s-2}u(x,t) -|u(y,t)|^{s-2}u(y,t))}}{|x-y|^{n+2\sigma}} dxdy\right)^2\\
	\le\,& C
\iint_{\R^{2n}}
	\frac{|u(x,t)-u(y,t)|^2}{|x-y|^{n+2\sigma}} dxdy\\
	&\times
\iint_{\R^{2n}}
	\frac{{(u(x,t)-u(y,t))
	(|u(x,t)|^{s-2}u(x,t) -|u(y,t)|^{s-2}u(y,t))}}{|x-y|^{n+2\sigma}} dxdy.
	\end{split}\end{equation}	
Notice also that, in the degenerate case, we deduce from~\eqref{FKPO-1}
and~\eqref{FKPO} that
\begin{equation}\label{ghUAJ:a9ok01}
\begin{split}
&\int_{\R^n} {\mathcal{N}}[u](x,t)\,|u(x,t)|^{s-2}\,u(x,t)\,dx\\
=\;&
-M_u\,
\iint_{\R^{2n}} 
\Big( u(x+y,t) + u(x-y,t) -2u(x,t) \Big) 
\,|u(x,t)|^{s-2}\,u(x,t)\,\frac{dx\,dy}{|y|^{n+2\sigma}}\\
=\;&
-2M_u\,\iint_{\R^{2n}} 
\big( u(y,t)-u(x,t) \big) 
\,|u(x,t)|^{s-2}\,u(x,t)\,\frac{dx\,dy}{|x-y|^{n+2\sigma}}\\
=\;&
2M_u\,\iint_{\R^{2n}} 
\big( u(x,t)-u(y,t) \big) 
\,|u(x,t)|^{s-2}\,u(x,t)\,\frac{dx\,dy}{|x-y|^{n+2\sigma}}\\
=\;&
M_u\,\iint_{\R^{2n}} 
\big( u(x,t)-u(y,t) \big) 
\,\big(|u(x,t)|^{s-2}\,u(x,t)-|u(y,t)|^{s-2}\,u(y,t)\big)\,\frac{dx\,dy}{|x-y|^{n+2\sigma}}
,\end{split}\end{equation}
with
\begin{equation}\label{ghUAJ:a9ok02}\begin{split} M_u\,&:=
M
\left( \int_{\R^{2n}}\frac{ |u(x,t)-u(y,t)|^2 }{|x-y|^{n+2\sigma}}
\,dx\,dy \right)\\&\ge
b\,\int_{\R^{2n}}\frac{ |u(x,t)-u(y,t)|^2 }{|x-y|^{n+2\sigma}}\,dx\,dy,
\end{split}\end{equation}
with~$b>0$.
	
Then, from~\eqref{ghUAJ:a9ok01}
and~\eqref{ghUAJ:a9ok02},
\begin{eqnarray*}
&&\int_{\R^n} {\mathcal{N}}[u](x,t)\,|u(x,t)|^{s-2}\,u(x,t)\,dx
\ge b\,\int_{\R^{2n}}\frac{ |u(x,t)-u(y,t)|^2 }{|x-y|^{n+2\sigma}}\,dx\,dy\\
\qquad&&\times
\iint_{\R^{2n}} 
\big( u(x,t)-u(y,t) \big) 
\,\big(|u(x,t)|^{s-2}\,u(x,t)-|u(y,t)|^{s-2}\,u(y,t)\big)\,\frac{dx\,dy}{|x-y|^{n+2\sigma}}.
\end{eqnarray*}
Comparing this with~\eqref{COM234520Aiekwd}, we conclude that
$$ \Vert u(\cdot,t) \Vert_{L^{s}(\Omega)}^{s+2}\le C\int_{\R^n} {\mathcal{N}}[u](x,t)\,|u(x,t)|^{s-2}\,u(x,t)\,dx,$$
up to renaming~$C$.
This gives that hypothesis \eqref{cond:complexstr} is fulfilled
in this case with $\gamma=3$.
\end{proof} 


Now we deal with the case of the magnetic operators.
We start with the case of classical space-derivatives.
For this, we exploit an elementary, but useful, inequality,
stated in the following auxiliary result:

\begin{lemma}
Let~$a$, $b\in\R$, and~$\alpha$, $\beta$, $t\in\R^n$.
Then
\begin{equation}\label{ST:00}
(a^2+b^2)\Big(|a t-\beta|^2 + |bt+\alpha|^2 \Big)\ge 
|a\alpha+b\beta|^2.\end{equation}
\end{lemma}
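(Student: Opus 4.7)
The plan is to recognize the right-hand side as the norm-squared of an appropriate linear combination of the two vectors appearing on the left. Introduce the auxiliary vectors
$$ V_1 := at-\beta \in \R^n, \qquad V_2 := bt+\alpha \in \R^n,$$
so that the left-hand side of \eqref{ST:00} equals $(a^2+b^2)\bigl(|V_1|^2+|V_2|^2\bigr)$. The crucial observation is the algebraic identity
$$ a\,V_2 - b\,V_1 \;=\; a(bt+\alpha) - b(at-\beta) \;=\; abt + a\alpha - abt + b\beta \;=\; a\alpha + b\beta,$$
where the terms involving $t$ cancel out exactly; in particular, $|a\alpha+b\beta|^2 = |aV_2 - bV_1|^2$.

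Once this identity is in place, the inequality reduces to a Cauchy--Schwarz estimate for a linear combination of two vectors with scalar coefficients. More precisely, for any scalars $c_1,c_2\in\R$ and vectors $w_1,w_2\in\R^n$ one has, by the triangle and Cauchy--Schwarz inequalities,
$$ |c_1 w_1 + c_2 w_2| \;\leq\; |c_1|\,|w_1| + |c_2|\,|w_2| \;\leq\; \sqrt{c_1^2+c_2^2}\,\sqrt{|w_1|^2+|w_2|^2}.$$
Squaring this yields
$$ |c_1 w_1 + c_2 w_2|^2 \;\leq\; (c_1^2+c_2^2)\bigl(|w_1|^2+|w_2|^2\bigr).$$
I would then apply this with $c_1:=-b$, $c_2:=a$, $w_1:=V_1$, $w_2:=V_2$ to obtain
$$ |a\alpha+b\beta|^2 \;=\; |{-b}\,V_1 + a\,V_2|^2 \;\leq\; (a^2+b^2)\bigl(|V_1|^2+|V_2|^2\bigr),$$
which is exactly \eqref{ST:00}.

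There is no real obstacle here: the whole content of the lemma is the cancellation $abt-abt=0$ that turns the combination $aV_2-bV_1$ into $a\alpha+b\beta$, after which Cauchy--Schwarz gives the bound for free. The only thing worth double-checking is that the scalar Cauchy--Schwarz inequality in the form $|c_1 w_1 + c_2 w_2|^2 \le (c_1^2+c_2^2)(|w_1|^2+|w_2|^2)$ is applied with the correct signs, but since the final inequality depends only on $a^2$ and $b^2$, this is harmless.
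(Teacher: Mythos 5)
Your proof is correct, and it takes a genuinely different and more elementary route than the paper's. The paper defines $f(t):=(a^2+b^2)\big(|at-\beta|^2+|bt+\alpha|^2\big)-|a\alpha+b\beta|^2$, computes $f(0)=|a\beta-b\alpha|^2\ge 0$, observes $f(t)\to+\infty$ as $|t|\to\infty$ when $a^2+b^2>0$, and then argues by contradiction: if $\inf f<0$ it is attained at an interior critical point $\bar t$, which one solves for from $\nabla f(\bar t)=0$ and substitutes back to find $f(\bar t)=0$, a contradiction. Your argument instead spots the exact cancellation $aV_2-bV_1=a\alpha+b\beta$ (with $V_1=at-\beta$, $V_2=bt+\alpha$) and finishes in one line with the two-vector Cauchy--Schwarz estimate $|c_1w_1+c_2w_2|^2\le(c_1^2+c_2^2)(|w_1|^2+|w_2|^2)$. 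The payoff of your route is that it is shorter, purely algebraic, avoids the calculus machinery and the case $a^2+b^2=0$ entirely, and makes transparent \emph{why} the inequality holds (it is Cauchy--Schwarz in disguise); the paper's optimization argument, while longer, is a more mechanical strategy that can be applied when no such identity is visible. Both are valid; yours is the cleaner proof.
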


\begin{proof} For any~$t\in\R^n$, we define
\begin{equation} \label{872wj2xz}
f(t):=
(a^2+b^2)\Big(|a t-\beta|^2 + |bt+\alpha|^2 \Big)- 
|a\alpha+b\beta|^2.\end{equation}
We observe that
\begin{equation}\label{ST:01}
\begin{split}
f(0)\,&=
(a^2+b^2)(\alpha^2+\beta^2) - |a\alpha+b\beta|^2\\ &=
a^2\alpha^2+
a^2\beta^2+
b^2\alpha^2+
b^2\beta^2
- ( a^2\alpha^2+b^2\beta^2 +2ab\alpha\beta)
\\ &=
a^2\beta^2+
b^2\alpha^2
- 2ab\alpha\beta\\
&= |a\beta-b\alpha|^2.
\end{split}\end{equation}
Moreover
\begin{equation}\label{ST:02}
\lim_{|t|\to+\infty} f(t)=\left\{
\begin{matrix}
+\infty & {\mbox{ if }} a^2+b^2>0,\\
0 & {\mbox{ otherwise.}}
\end{matrix}
\right.
\end{equation}
Now we claim that
\begin{equation}\label{ST:03}
f(t)\ge0,\end{equation}
for all~$t\in\R^n$. To prove~\eqref{ST:03} we argue by contradiction
and assume that
$$ \inf_{\R^n} f<0.$$
Then, in view of~\eqref{ST:01} and~\eqref{ST:02}, we have that
\begin{equation}\label{ST:04}
f(\bar t)=\inf_{\R^n} f<0,\end{equation}
for some~$\bar t\in \R^n$. As a consequence,
$$ 0=\nabla f(\bar t)=
2(a^2+b^2)\Big(a(a \bar t-\beta) + b(b\bar t+\alpha) \Big)=
2(a^2+b^2)\Big((a^2+b^2) \bar t-a\beta+b\alpha \Big),$$
which implies that
$$ \bar t=
\frac{a\beta-b\alpha}{a^2+b^2}.$$
Thus, we substitute this information into~\eqref{872wj2xz}
and we obtain that
\begin{eqnarray*}
f(\bar t) &=&
(a^2+b^2)\left(\left|
\frac{a^2\beta-ab\alpha}{a^2+b^2}-\beta\right|^2 + 
\left|\frac{ab\beta-b^2\alpha}{a^2+b^2}+\alpha\right|^2 \right)- 
|a\alpha+b\beta|^2\\&=&
(a^2+b^2)\left(\left|
\frac{b^2\beta+ab\alpha}{a^2+b^2}\right|^2 + 
\left|\frac{ab\beta+a^2\alpha}{a^2+b^2}\right|^2 \right)- 
|a\alpha+b\beta|^2\\&=&
(a^2+b^2)\left(b^2\left|
\frac{b\beta+a\alpha}{a^2+b^2}\right|^2 + 
a^2\left|\frac{b\beta+a\alpha}{a^2+b^2}\right|^2 \right)- 
|a\alpha+b\beta|^2\\&=&
(a^2+b^2)(a^2+b^2)\left|
\frac{b\beta+a\alpha}{a^2+b^2}\right|^2 - 
|a\alpha+b\beta|^2
\\&=&0.\end{eqnarray*}
This is in contradiction with~\eqref{ST:04} and so it proves~\eqref{ST:03},
which in turn imples~\eqref{ST:00}, as desired.
\end{proof}

With this, we are now in the position of completing the proof
of Theorem \ref{thm:cl_magnetic} and obtain the desired decay estimates
for the classical magnetic operator.

\begin{proof}[Proof of Theorem \ref{thm:cl_magnetic}]
  We want to prove inequality \eqref{cond:complexstr} for the classical magnetic operator in order to apply Theorem \ref{thm:complex}.
To this end,
we aim at proving that 
\begin{equation}\label{FU:MAGN}
\Re\big\{ \bar u{\mathcal{N}} u\big\}+|u|\Delta|u|\ge0.
\end{equation}
To check this, we observe\footnote{For an alternative proof based
on fractional arguments, see the forthcoming footnote~\ref{FU:MAGN3}
on page~\pageref{FU:MAGN3}.}
that we can make the computations
in the vicinity of a point~$x$ for which~$|u(x)|>0$.
Indeed, if~\eqref{FU:MAGN} holds true at~$\{|u|>0\}$,
we can fix~$\epsilon>0$ and consider the function~$u_\epsilon:=u+\epsilon$.
In this way, $u_\epsilon(x)=\epsilon>0$, hence we can apply~\eqref{FU:MAGN}
to~$u_\epsilon$ and conclude that
\begin{equation}\label{90:91}
\begin{split}
0 \,&\le \Re\big\{ \bar u_\epsilon(x){\mathcal{N}} u_\epsilon(x)\big\}+|u_\epsilon(x)|\Delta|u_\epsilon(x)|\\
&=
\Re\big\{ (\bar u(x)+\epsilon){\mathcal{N}}u(x)\big\}+
|u(x)+\epsilon|\Delta|u_\epsilon(x)|.
\end{split}\end{equation}
Notice that, for any test function~$\varphi\in C^\infty_0(\Omega)$,
we have that
$$ \lim_{\epsilon\to0}\int_\Omega
\Delta|u_\epsilon(y)|\,\varphi(y)\,dy=
\lim_{\epsilon\to0}\int_\Omega
|u_\epsilon(y)|\,\Delta\varphi(y)\,dy=
\int_\Omega
|u(y)|\,\Delta\varphi(y)\,dy,$$
and so (in the distributional sense)
$$ \lim_{\epsilon\to0} \Delta|u_\epsilon|=
\Delta|u|.$$
Hence, we can pass to the limit in~\eqref{90:91}
and obtain~\eqref{FU:MAGN}.

Accordingly, to prove~\eqref{FU:MAGN}, from now on we will
focus on the case in which~$|u|>0$. We write~$u=a+ib$ and
we observe that 
  \begin{equation}\label{Bvah}
\begin{split}
    &\Re \{ -\bar{u}(\nabla-iA)^2 u   \} \\
  	=\;& \Re \left\{ -\bar{u}(\Delta u - |A|^2 u -iA \cdot \nabla u -\nabla \cdot (iAu) )   \right\} \\
  	=\;& \Re \left\{ -\bar{u} \Delta u + |A|^2 |u|^2 +2\bar{u}iA \cdot \nabla u +i(\nabla \cdot A)|u|^2 \right\} \\
  	 =\;& \Re \left\{ (-a+ib)( \Delta a+i\Delta b)
  	+ |A|^2 (a^2+b^2) +2(b+ia)A \cdot( \nabla a+i\nabla b)
  	+i(\nabla \cdot A)|u|^2 \right\}\\
=\;&  -a\Delta a-b\Delta b
  	+ |A|^2 (a^2+b^2) +2b\nabla a\cdot A -2a\nabla b\cdot A
  	,
  	\end{split}\end{equation}
  where we used the fact that $A$ is
  real valued.
  
On the other hand, at points where~$|u|\ne0$,
\begin{eqnarray*}&& \Delta |u|^2=2|u|\Delta |u|+2|\nabla |u||^2\\ 
{\mbox{and }}&&\nabla |u|= \frac{a \nabla a + b \nabla b}{|u|},
\end{eqnarray*}
therefore
\begin{eqnarray*}
|u|\Delta |u|&=&\frac12\, \Delta |u|^2-|\nabla |u||^2\\
&=& \frac12\,\Delta(a^2+b^2)-\frac{|a \nabla a + b \nabla b|^2}{|u|^2}\\
&=& a\Delta a+b\Delta b+|\nabla a|^2+|\nabla b|^2-\frac{|a \nabla a + b \nabla b|^2}{a^2+b^2}.
\end{eqnarray*}
{F}rom this and~\eqref{Bvah}, we conclude that
\begin{equation}\label{9384-0348}
\begin{split}&
\Re\big\{ \bar u{\mathcal{N}} u\big\}+|u|\Delta|u|
\\ =\;& |\nabla a|^2+|\nabla b|^2-\frac{|a \nabla a + b \nabla b|^2}{a^2+b^2}
+ |A|^2 (a^2+b^2) +2b\nabla a\cdot A -2a\nabla b\cdot A
\\ =\;& \big| aA-\nabla b\big|^2+\big| bA+\nabla a\big|^2
-\frac{|a \nabla a + b \nabla b|^2}{a^2+b^2},
\end{split}\end{equation}
and the latter term is nonnegative, thanks to~\eqref{ST:00}
(applied here with~$t:=A$, $\alpha:=\nabla a$ and~$\beta:=\nabla b$).
This completes the proof of~\eqref{FU:MAGN}.

Then, from~\eqref{FU:MAGN} here and~\cite{SD.EV.VV}
(see in particular the formula before~(2.12) in~\cite{SD.EV.VV},
exploited here with~$p:=2$ and~$m:=2$),
\begin{eqnarray*}
&& \int_\Omega |u|^{s-2}
\Re\big\{ \bar u{\mathcal{N}} u\big\}\,dx\ge
-\int_\Omega |u|^{s-1}\Delta|u|\,dx\\
&&\qquad=\int_\Omega\nabla |u|^{s-1}\cdot\nabla|u|\,dx
\ge C\,\Vert u \Vert_{L^s{(\Omega)}}^{s},
\end{eqnarray*}
for some~$C>0$.
This establishes inequality~\eqref{cond:complexstr} in this case,
with~$\gamma=1$. Hence, Theorem \ref{thm:cl_magnetic} follows
from Theorems~\ref{thm:complex} and~\ref{thm:classic}.
\end{proof}

Now we deal with the fractional magnetic operator.

\begin{proof}[Proof of Theorem \ref{thm:magnetic}]
	We have to verify the structural hypothesis \eqref{cond:complexstr}. We already know
	that the desired inequality holds for the fractional Laplacian $(-\Delta)^{\sigma} v$ for $\sigma \in (0,1)$ and $v\geq 0$ (compare Theorem 1.2 of \cite{SD.EV.VV}). We notice that
	\begin{equation}\label{FU:MAGN2}
	\begin{split}
		\Re& \left\{ \frac{\bar{u}(x,t) \left( u(x,t) - e^{i(x-y)A(\frac{x+y}{2})}u(y,t) \right)}{|x-y|^{n+2\sigma}}  \right\} \\
		& \hspace{3em} = \frac{|{u}(x,t)|^2  - \Re \left\{ e^{i(x-y)A(\frac{x+y}{2})}u(y,t) \bar{u}(x,t )\right\}  }{|x-y|^{n+2\sigma}} \\
		& \hspace{3em} \geq |u(x,t)| \frac{|{u}(x,t)|  -  |u(y,t)| }{|x-y|^{n+2\sigma}}, 
	\end{split}	
	\end{equation}
and therefore\footnote{Interestingly, \label{FU:MAGN3}
integrating and taking the limit as~$\sigma\to1$ in~\eqref{FU:MAGN2},
one obtains an alternative (and conceptually simpler)
proof of~\eqref{FU:MAGN}.
This is a nice example of analysis in a nonlocal setting
which carries useful information to the classical case.}
\begin{equation}\label{8i9ik9iok92}
\int_{\Omega} |u(x,t)|^{s-2} \Re \{ \bar{u}(x,t)\mathcal{N} [u](x,t)\} \; dx 
\geq \int_{\Omega} |u(x,t)|^{s-1} (-\Delta)^{\sigma}|u|(x,t) \; dx.\end{equation}
Also, since $|u|$ is a real and positive function, we can exploit
formula~(2.25) in~\cite{SD.EV.VV} (used here with~$p:=2$) and write that
$$ \int_{\Omega} |u(x,t)|^{s-1} (-\Delta)^{\sigma}|u|(x,t) \; dx\ge
{C} \Vert u \Vert_{L^{s}(\Omega) }^{s}.$$
{F}rom this and~\eqref{8i9ik9iok92} we infer that
condition~\eqref{cond:complexstr} is satisfied in this case
with~$\gamma=1$. 
Then, the desired conclusion in Theorem \ref{thm:magnetic}
follows from Theorems \ref{thm:complex}
and~\ref{thm:classic}.
\end{proof}

\end{document}